\newtheorem{theorem}{Theorem}
\newtheorem{lemma}[theorem]{Lemma}
\newtheorem{corollary}[theorem]{Corollary}
\newtheorem{definition}[theorem]{Definition}
\newtheorem{conjecture}[theorem]{Conjecture}
\newcommand{\dd}{\,\mathrm{d}}
\newcommand{\E}{{\mathbb E}}
\newcommand{\R}{{\mathbb R}}
\newcommand{\CC}{{\mathcal{C}}}
\newcommand{\bsx}{\boldsymbol x}
\newcommand{\bsU}{\boldsymbol U}
\newcommand{\bsX}{\boldsymbol X}
\newcommand{\bsY}{\boldsymbol Y}
\newcommand{\bsZ}{\boldsymbol Z}
\newcommand{\bsnull}{\boldsymbol 0}
\newcommand{\bsone}{\boldsymbol 1}
\newcommand{\one}{\mathds{1}}
 \title{$L_p$-norm spherical copulas}
\author{ Carole Bernard \\
\texttt{carole.bernard@grenoble-em.com}\\
Department of Accounting, Law and Finance\\
Grenoble Ecole de Management,\\
Grenoble, France. 
\and 
Alfred M\"{u}ller\\
\texttt{mueller@mathematik.uni-siegen.de}\\
Department of Mathematics,\\
University of Siegen,\\
Siegen, Germany. 
\and
Marco Oesting\\
\texttt{marco.oesting@mathematik.uni-stuttgart.de}\\
Stuttgart Center for Simulation Science (SC SimTech)\\
and Institute for Stochastics and Applications, \\
University of Stuttgart,\\
Stuttgart, Germany.
}
\begin{document}
\maketitle

\begin{abstract}
In this paper we study $L_p$-norm spherical copulas for arbitrary $p \in [1,\infty]$ and arbitrary dimensions. The study is motivated by a conjecture that these distributions lead to a sharp bound for the value of a certain generalized mean difference. 
We fully characterize conditions for existence and uniqueness of  $L_p$-norm spherical copulas. Explicit formulas for their densities and correlation coefficients are derived and the distribution of the radial part is determined. Moreover, statistical inference and efficient simulation are considered. 
\end{abstract}

Keywords: Copula, spherical symmetry, exchangeability, extendability

AMS 2020 subject classifications: 62H05, 62E10, 60E05

\section{Introduction}
Starting from multivariate normal distributions, spherical or more generally elliptical distributions have been widely studied in probability and statistics. In the absolutely continuous case, spherical distributions have the appealing property that the contour lines of their probability density functions 
are circles, i.e.\ the value $f(x)$ of the probability density function depends on the Euclidean norm $\|\bsx\|_2$ of its argument only. \citet{gupta-song-97} suggested to generalize this property by replacing the Euclidean norm $\|\bsx\|_2$ by an arbitrary $L_p$-norm  $\|\bsx \|_p := (\sum_{i=1}^n |x_i|^p)^{1/p}$  with $1 \leq p < \infty$ to define $L_p$-norm spherical distributions. In other words, a distribution with probability density function $f:\R^n \to [0,\infty)$ is called $L_p$-norm spherical if  $f(\bsx) = g(\|\bsx \|_p)$ for some function $g:[0,\infty) \to [0,\infty)$. Notice that having a density that only depends on an $L_p$-norm must not be confused with the case of having a multivariate survival function which depends only on the $L_p$-norm. These distributions have also been considered in the literature. For a recent study see e.g. \cite{mai-wang}. 

It is a natural question to ask for spherical distributions with some prescribed marginal distributions. In the classical $L_2$-norm case this problem is well studied \citep[cf.][for instance]{eaton1981,joe1997}.
Of particular interest are $L_2$-norm spherical distributions with uniform marginal distributions, i.e.\ $L_2$-norm spherical copulas, that are investigated
in \citet{schwarz1985} and \cite{perlman-wellner-11}.
Motivated by recent findings about the role of spherical symmetric copulas in some optimization problem in \cite{BM2020}, we will study in this paper $L_p$-norm spherical copulas for general $p \in [1,\infty]$.

More precisely, \cite{BM2020} investigate a version of  ``dependence uncertainty bounds'' in which they maximize the so-called multivariate Gini mean difference $S(C,C)$ over all possible bivariate copulas $C$. The multivariate Gini mean difference $S(C,C)$ is defined in \cite{koshevoy1997} as the expected distance between two independent $d$-dimensional random vectors $\bsX$ and $\bsY$ with distribution $C$, i.e. $S(C,C) = \E\left( \| \bsX - \bsY \|_2 \right)$, see also e.g., \cite{yitzhaki2003} for an overview on Gini indices and their applications.  This expected distance between independent copies of random vectors is the main ingredient in computing the energy distance between two probability distributions, and enters also  in the definition of the energy score that measures a distance between an observation and a distribution. There are various applications  of the energy distance and the energy score, in particular for goodness-of-fit tests in multivariate statistics \citep[see][for a review]{szekely2017}, and  in multivariate probabilistic forecasting, as the  energy score is  a strictly proper scoring rule for multivariate distributions \cite[see][]{gneiting2007}. It is an important question, how sensitive such a scoring rule is with respect to misspecifications of the dependence structure. This problem has been studied by \cite{pinson2013} and \cite{ziel2019}, which inspired \cite{BM2020} to investigate dependence uncertainty bounds for such quantities.

The energy score can be generalized by introducing a parameter $\beta \in (0,2)$ and replacing $S(C,C)$ by a generalized expected distance between two independent $d$-dimensional random vectors of a multivariate copula $C$  defined as 
$S_\beta(C,C) = \E\left( \| \bsX - \bsY \|^\beta_2 \right).$ It is still an open problem to determine dependence uncertainty bounds for this more general case. However, numerical experiments lead to the conjecture that the $L_p$-norm spherical copulas introduced in this paper may play a key role in determining dependence uncertainty upper bound for this generalized Gini mean difference. This motivated us to study this object in detail.

In Section \ref{S1}, we recall results by \citet{gupta-song-97} and provide the definitions of $L_p$-norm spherical distributions and copulas for $1 \le p < \infty$. In Section \ref{sec:bivariate} we focus on the interesting case of two dimensions and study $L_p$-norm spherical bivariate copulas. A generalization to multivariate copulas is then presented in Section \ref{sec:multivariate}. Finally, we present {$L_\infty$-norm spherical distributions and copulas in Section \ref{S4}.

\section{$L_p$-norm spherical distributions and copulas \label{S1}}

We build on the work of \citet{gupta-song-97} who consider $L_p$-norm spherical distributions on $\R^n$ for $1 \le p < \infty$. Assume that the density $f:\R^n \to [0,\infty)$ of a random vector $\bsX = (X_1,\ldots,X_n)$ with an absolutely continuous distribution depends only on the norm $\|\bsx \|_p := (\sum_{i=1}^n |x_i|^p)^{1/p}$, i.e.\ $f(\bsx) = g(\|\bsx \|_p)$ for some function $g:[0,\infty) \to [0,\infty)$. The function $g$ is called the density generator. We can now consider the polar decomposition of $\bsx$ w.r.t.\ the $L_p$-norm and define
$$R := \|\bsX\|_p \quad  \hbox{and} \quad   \bsU_n := \bsX / \|\bsX\|_p$$
as the radial and the angular part, respectively.
In Lemma 2.1 of \citet{gupta-song-97}, it is  shown that 
$R$ and $\bsU_n$ are then independent. The distribution of $\bsU_n$ is called the $L_p$-uniform distribution on the sphere and has been considered in detail in \citet{song-gupta-97}. The general definition of an 
$L_p$-norm spherical distribution is then given by the stochastic representation that $\bsX = R \cdot \bsU_n$ for a radius $R \ge 0$ with an arbitrary univariate non-negative random variable $R$, independent of $\bsU_n$. 

For a random vector $\bsX = (X_1,\ldots,X_n)$ with an $L_p$-norm spherical distribution, define the vector
\begin{equation} \label{x+def}
\bsX^+ := (|X_1|,\ldots,|X_n|)
\end{equation} 
with values in $\R_+^n := [0,\infty)^n$. If $\bsX$ has the density $f(\bsx) = g(\|\bsx \|_p)$, then it is easy to see because of the symmetry that $\bsX^+$ has the density
$$
f^+(\bsx) = 2^n \cdot g(\|\bsx \|_p), \quad \bsx \in \R_+^n.
$$
The distribution of $\bsX^+$ is an $L_p$-norm spherical distribution on $\R_+^n$. Given such a vector $\bsX^+ = (X_1^+,\ldots,X_n^+)$ we can then recover an $L_p$-norm spherical distribution on $\R^n$ with density generator $g$ by the construction
\begin{equation} \label{plus-backtransform}
\bsX =_d (X_1^+ \cdot Z_1,\ldots,X_n^+ \cdot Z_n),
\end{equation}
where $Z_1,\ldots, Z_n$ are i.i.d.~random variables with $P(Z_i = 1) = P(Z_i=-1) = 1/2$, independent of $\bsX^+$. 

\subsection{$L_p$-norm spherical copulas}

In this paper we are interested in random vectors $\bsX$ with $L_p$-norm spherical distributions with uniform marginals. We use the notation $U(a,b)$ for a uniform distribution of a random variable $U$ with density
$$
f_U(x) = \frac{1}{b-a} \one_{[a < x < b]}
$$ 
for $a < b$, where $\one_A$ denotes the indicator function of a set $A$ in contrast to $\bsone = (1,\ldots,1)$ which will henceforth be used for a vector whose components are all equal to 1.
For a random vector $\bsX$ with an $L_p$-norm spherical distribution with $U(-1,1)$ uniform marginals 
we have $|X_i| \sim U(0,1)$ and thus the distribution of $\bsX^+$ is a copula
with copula density 
\begin{equation} \label{lpc-def}
c^+(\bsx) = 2^n \cdot g(\|\bsx \|_p) \cdot \one_{[\|\bsx \|_p <1]} \cdot \one_{[\bsx \in (0,1)^n]}
, 
\end{equation} 
in case such a density exists. We will call such a copula a positive $L_p$-norm spherical copula. Notice that this means that we require the distribution to be $L_p$-norm spherical if considered as a distribution
on $\R_+^n$. This condition is stronger than requiring the existence of a density of the form
$$
c(\bsx) =  g(\|\bsx \|_p) \cdot \one_{[\bsx \in (0,1)^n]}
$$
as Equation \eqref{lpc-def} also implies that $c^+(\bsx )=0$ for all $\bsx \in (0,1)^n$ with
$\|\bsx\|_p \ge 1$.  

An important example showing the difference is the independence copula with density
$$
c(\bsx) =   \one_{[\bsx \in (0,1)^n]}.
$$
This of course depends only on the norm for any $p \in [1,\infty]$, when considered as a function on $(0,1)^n$, but -- according to  \eqref{lpc-def} -- we will call this an $L_p$-norm spherical copula with respect to $L_\infty$ only. %, as this distribution is only an $L_p$-norm spherical distribution on $\R_+^n$ with respect to $L_\infty$. 
Using the stronger definition in \eqref{lpc-def} has the advantage that we will get uniqueness of the copulas. Therefore we will use the following formal definition.

\begin{definition} \label{plp-copula}
A copula is called a \textit {positive $L_p$-norm spherical copula}
 if the corresponding distribution is an $L_p$-norm spherical distribution on $\R_+^n$.
\end{definition}

There is another natural copula derived from an  $L_p$-norm spherical distribution of a random vector $\bsX$ with $U(-1,1)$ uniform distributed marginals obtained from the transformation
\begin{equation} \label{lpcirc-def}
\bsX^\circ := \left( \frac{1}{2} (X_1+1), \ldots,  \frac{1}{2} (X_n +1) \right). 
\end{equation} 
In case of an existing density this has the form
$$
c^\circ(\bsx) = 2^n \cdot g(\|2\bsx -   \bsone\|_p) \cdot \one_{[\|2\bsx -\bsone\|_p < \frac{1}{2}]}.
$$
Of course this transformation is a bijection. For any such $\bsX^\circ$ we can recover the corresponding $\bsX$ by considering $X_i := 2 X_i^\circ -1, i=1,\ldots,n$. We thus get the following formal definition of a 
second variant of an $L_p$-norm spherical copula. 

\begin{definition} \label{clp-copula}
The distribution $C^\circ$ of a random vector $\bsX^\circ = (X_1^\circ, \ldots, X_n^\circ)$ with $U(0,1)$ uniform marginals  is called a \textit {circular $L_p$-norm spherical copula}, if the
distribution of the random vector $\bsX = (2 X_1^\circ -1, \ldots, 2 X_n^\circ -1)$ is an $L_p$-norm spherical distribution on $\R^n$.
\end{definition}

Thus we can define for any random vector $\bsX$ with $U(-1,1)$ uniform marginals two versions of $L_p$-norm spherical copulas as the distributions of the transformations $\bsX^+$ and $\bsX^\circ$ 
and there is an obvious bijection between these objects, so that it is sufficient to treat one of these three objects. Very often it is most convenient to work with the version of $\bsX^+$, as one can then avoid
the use of absolute values in the formulas for densities and it additionally has the advantage that the marginal distributions of the corresponding $L_p$-norm uniform distribution can be related to a Beta distribution.

From now on, we will use the notation $Beta(\alpha, \beta)$ for a Beta distribution with the density
$$
f(x) = \frac{\Gamma(\alpha + \beta)}{\Gamma(\alpha) \Gamma(\beta)} \ x^{\alpha -1} (1-x)^{\beta -1}, \quad 0 < x < 1.
$$
This is well defined for $\alpha, \beta > 0$ and obviously we get a uniform distribution $U(0,1)$ if $\alpha = \beta = 1$. The following lemma on properties of the Beta distribution will be used later.

\begin{lemma} \label{lemma-beta}
a) if $X \sim Beta(\alpha,1)$ then $X^s \sim Beta(\alpha / s, 1)$ for all $s > 0$.\\
b) if $X \sim Beta(\alpha, \beta)$ and $Y \sim Beta(\alpha+ \beta, \gamma)$ are independent, then $X \cdot Y \sim Beta(\alpha, \beta + \gamma)$.
\end{lemma}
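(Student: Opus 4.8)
The plan is to prove both parts by directly computing with the Beta density, exploiting the fact that these are statements about distributions of transformed Beta variables.

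For part (a), I would use the change-of-variables (monotone transformation) technique. Since $X \sim Beta(\alpha,1)$ has density $f_X(x) = \alpha x^{\alpha-1}$ on $(0,1)$, I would set $Y = X^s$ for $s > 0$, which is a strictly increasing bijection from $(0,1)$ to $(0,1)$ with inverse $x = y^{1/s}$. Computing $\frac{dx}{dy} = \frac{1}{s} y^{1/s - 1}$ and substituting gives the density of $Y$ as $f_Y(y) = f_X(y^{1/s}) \cdot \frac{1}{s} y^{1/s-1} = \alpha y^{(\alpha-1)/s} \cdot \frac{1}{s} y^{1/s-1} = \frac{\alpha}{s} y^{\alpha/s - 1}$. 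This is precisely the density of a $Beta(\alpha/s, 1)$ distribution, so the claim follows immediately. An even cleaner route is to compare distribution functions: for $Beta(\alpha,1)$ one has $P(X \le t) = t^\alpha$, hence $P(X^s \le t) = P(X \le t^{1/s}) = t^{\alpha/s}$, which is the distribution function of $Beta(\alpha/s,1)$.

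For part (b), I would compute the density of the product $W = X \cdot Y$ via the convolution-type formula for products of independent random variables. Writing $f_X(x) = \frac{\Gamma(\alpha+\beta)}{\Gamma(\alpha)\Gamma(\beta)} x^{\alpha-1}(1-x)^{\beta-1}$ and $f_Y(y) = \frac{\Gamma(\alpha+\beta+\gamma)}{\Gamma(\alpha+\beta)\Gamma(\gamma)} y^{\alpha+\beta-1}(1-y)^{\gamma-1}$, the density of the product is $f_W(w) = \int_w^1 f_X(w/y) f_Y(y) \frac{1}{y} \dd y$ (the lower limit being $w$ since $x = w/y < 1$ forces $y > w$). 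Substituting the explicit forms and simplifying the powers of $y$, I expect the $\Gamma(\alpha+\beta)$ factors to cancel, leaving a constant times $w^{\alpha-1}$ multiplying an integral $\int_w^1 (1 - w/y)^{\beta-1} y^{-\beta}(1-y)^{\gamma-1} \dd y$. The goal is to show this equals (a constant times) $w^{\alpha-1}(1-w)^{\beta+\gamma-1}$, which is the $Beta(\alpha, \beta+\gamma)$ shape.

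The main obstacle will be evaluating that integral in part (b) and identifying it with the Beta-function normalization. The clean way is to substitute $y = w + (1-w)t$ for $t \in (0,1)$, so that $1 - w/y = \frac{(1-w)t}{y}$ and $1 - y = (1-w)(1-t)$; after this substitution the integrand should collapse, modulo powers of $y$ that cancel against the $y^{-\beta}$ factor, into a pure Beta integral in $t$ yielding $B(\beta,\gamma)$ together with the factor $(1-w)^{\beta+\gamma-1}$. Collecting the resulting Gamma functions and verifying that they assemble into $\frac{\Gamma(\alpha+\beta+\gamma)}{\Gamma(\alpha)\Gamma(\beta+\gamma)}$ completes the identification. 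An alternative to circumvent the integral entirely is the moment method: compute $\E(W^k) = \E(X^k)\E(Y^k)$ using the known Beta moment $\E(X^k) = \frac{\Gamma(\alpha+k)\Gamma(\alpha+\beta)}{\Gamma(\alpha)\Gamma(\alpha+\beta+k)}$ for each factor, observe the telescoping cancellation of the $\Gamma(\alpha+\beta)$-type terms, and match the product against the moments of $Beta(\alpha,\beta+\gamma)$; since the Beta distribution is determined by its moments on $(0,1)$, this proves the claim. I would likely present the moment argument as the primary proof, since the telescoping is transparent and avoids the delicate integral manipulation.
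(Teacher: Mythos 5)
Your proposal is correct and takes essentially the same approach as the paper: part (a) by a direct change of variables (the paper's ``simple calculation''), and part (b) primarily by matching all moments of $X \cdot Y$ with those of a $Beta(\alpha,\beta+\gamma)$ distribution, which is exactly the argument the paper invokes via \citet{mckenzie-85}, with moment-determinacy justified by the bounded support. One minor slip in your sketched alternative (non-primary) density route: the power of $y$ in the integrand should be $y^{\beta-1}$, not $y^{-\beta}$, so that $y^{\beta-1}(1-w/y)^{\beta-1} = (y-w)^{\beta-1}$ and the substitution $y = w + (1-w)t$ collapses the integral to $(1-w)^{\beta+\gamma-1}B(\beta,\gamma)$ as intended.
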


\begin{proof}
a) follows by a simple calculation and b) can be shown by showing that all moments of $X \cdot Y$ coincide with the moments of a $Beta(\alpha, \beta + \gamma)$ distribution, see \citet{mckenzie-85}.
\end{proof}

For $\bsU_n$ having an $L_p$-uniform distribution on the sphere the marginal distribution of $|U_i|$ is determined in \citet{song-gupta-97}, Theorem 2.1.

\begin{theorem} \label{lpu-marginal}
If $\bsU_n$ has an $L_p$-uniform distribution on the sphere for some $p \in [1,\infty)$, then the marginals fulfill 
$$
|U_i|^p \sim Beta\left(\frac{1}{p},\frac{n-1}{p}\right).
$$
\end{theorem}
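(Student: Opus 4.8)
The plan is to exploit the fact that, by Lemma 2.1 of \citet{gupta-song-97} quoted above, the angular part $\bsU_n$ is independent of the radial part $R$, so that the distribution of $\bsU_n$ --- and hence of $|U_i|$ --- does not depend on the choice of the density generator $g$. I am therefore free to pick the most convenient $L_p$-norm spherical distribution. The natural choice is the $p$-generalized Gaussian, i.e.\ the vector $\bsX = (X_1,\ldots,X_n)$ whose components are i.i.d.\ with univariate density proportional to $\exp(-|x|^p)$. Its joint density is then proportional to $\exp(-\sum_{i=1}^n |x_i|^p) = \exp(-\|\bsx\|_p^p)$, which depends on $\bsx$ only through $\|\bsx\|_p$. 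Hence $\bsX$ is $L_p$-norm spherical, and $\bsU_n =_d \bsX / \|\bsX\|_p$ for this particular $\bsX$.

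Next I would pass to the variables $Y_i := |X_i|^p$. A one-dimensional change of variables ($|X_i| = Y_i^{1/p}$) shows that each $Y_i$ is gamma distributed with shape parameter $1/p$ and rate $1$, i.e.\ with density proportional to $y^{1/p-1} e^{-y}$ on $(0,\infty)$, and the $Y_i$ remain independent. Since
$$
|U_i|^p = \frac{|X_i|^p}{\|\bsX\|_p^p} = \frac{Y_i}{Y_1 + \cdots + Y_n},
$$
the statement reduces to identifying the law of a single normalized coordinate of a vector of independent gamma variables with common rate.

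The final step is the classical gamma--Beta identity. Writing $T := \sum_{j \neq i} Y_j$, the sum of the $n-1$ remaining independent gammas (each of shape $1/p$) is again gamma, with shape $(n-1)/p$ and rate $1$, and is independent of $Y_i$. For two independent gamma variables $Y_i$ and $T$ with shapes $\alpha$ and $\gamma$ and the same rate, a two-variable change of variables to $(Y_i/(Y_i+T),\, Y_i+T)$ gives $Y_i/(Y_i+T) \sim Beta(\alpha,\gamma)$. Applying this with $\alpha = 1/p$ and $\gamma = (n-1)/p$ yields $|U_i|^p \sim Beta(1/p,(n-1)/p)$, as claimed. The only genuine piece of work is this gamma--Beta identity; everything else is bookkeeping. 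The point that most needs care is the generator-independence argument at the outset, but this is precisely what the cited independence of $R$ and $\bsU_n$ delivers, so the choice of the $p$-generalized Gaussian is harmless.
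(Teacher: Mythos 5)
Your proof is correct, but note that the paper itself offers no proof of this statement: it is imported verbatim as Theorem 2.1 of \citet{song-gupta-97}. The derivation in that reference is analytic -- one first obtains the joint density of $(|U_1|,\ldots,|U_{n-1}|)$ by an explicit change of variables with Jacobian (Lemma 1.1 and Theorem 1.1 there, both of which the present paper reuses later in the proof of the corollary giving the copula density \eqref{eq:copula-density}), and then identifies the one-dimensional marginal by integrating out coordinates. Your route is the standard probabilistic alternative via the gamma representation: pick the $p$-generalized Gaussian as a representative $L_p$-norm spherical law, observe that $Y_i := |X_i|^p$ are i.i.d.\ gamma with shape $1/p$ and common rate, write $|U_i|^p = Y_i/(Y_1+\cdots+Y_n)$, and finish with the classical gamma--beta identity. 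All steps check out, and your argument in fact buys more than the statement asks for: the same representation shows that $(|U_1|^p,\ldots,|U_n|^p)$ follows a Dirichlet distribution with all parameters equal to $1/p$, from which every joint marginal of the angular part follows at once. The analytic route, conversely, produces the explicit angular density itself, which this paper needs anyway for \eqref{eq:copula-density}.

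One point of precision: independence of $R$ and $\bsU_n$ by itself does not imply that the law of $\bsU_n$ is the same for every density generator $g$, so your ``so that'' overstates what the independence statement delivers. The generator-independence is a separate (true) assertion, also contained in Lemma 2.1 of \citet{gupta-song-97} and implicit in the paper's speaking of \emph{the} $L_p$-uniform distribution on the sphere; since the theorem's hypothesis already fixes the law of $\bsU_n$ to be this distribution, i.e.\ the law of $\bsX/\|\bsX\|_p$ for any $L_p$-norm spherical $\bsX$, your choice of the $p$-generalized Gaussian representative is legitimate -- you should simply cite the generator-independence directly rather than deduce it from the independence of radius and angle.
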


\subsection{Existence and uniqueness of 
$L_p$-norm spherical copulas}

For a general $L_p$-norm spherical distribution we can easily show that there is at most one possible distribution with given marginals. The following theorem is a straightforward generalization of Proposition 2.1 in \citet{perlman-wellner-11} and is not really new, but for completeness we add the simple proof, see also Section 4.9 of \cite{joe1997}.

\begin{theorem} \label{th:unique}
$L_p$-norm spherical copulas are unique, if they exist.
\end{theorem}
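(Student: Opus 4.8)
The plan is to reduce uniqueness of the copula to uniqueness of the law of the radial part $R$, and then to extract the latter from the uniform-marginal constraint by a multiplicative deconvolution argument. By the stochastic representation of an $L_p$-norm spherical distribution, every such copula arises (in the version $\bsX^+$) as $\bsX^+ =_d (R|U_1|,\ldots,R|U_n|)$, where $\bsU_n=(U_1,\ldots,U_n)$ has the $L_p$-uniform distribution on the sphere --- a distribution fixed once $p$ and $n$ are fixed --- and $R \ge 0$ is independent of $\bsU_n$. Because the three versions $\bsX$, $\bsX^+$, $\bsX^\circ$ are in bijection, it suffices to treat $\bsX^+$; and since $\bsU_n$ is given, the only remaining degree of freedom is the law of $R$. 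Thus I would first record that the copula is determined by the law of $R$, reducing the theorem to showing that at most one radial law is compatible with uniform marginals.

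The marginal condition reads $R|U_i| \sim U(0,1)$ for each $i$, and by exchangeability of $\bsU_n$ this is a single condition. Here $R$ and $|U_i|$ are independent, and by Theorem \ref{lpu-marginal} the law of $|U_i|$ is explicitly known through $|U_i|^p \sim Beta(1/p,(n-1)/p)$. I would also note that $R$ is bounded: since $R|U_i| \le 1$ almost surely and $|U_i|$ has essential supremum $1$, independence forces $R \le 1$ almost surely. The problem is therefore to show that the multiplicative equation $R \cdot |U_i| \sim U(0,1)$ has a unique solution for the law of $R$ supported on $[0,1]$.

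This is where the Mellin transform enters, turning the product of independent nonnegative variables into a product of transforms: writing $\mathcal{M}_Y(s)=\E[Y^{s-1}]$, independence yields $\mathcal{M}_R(s)\,\mathcal{M}_{|U_i|}(s) = \mathcal{M}_{U(0,1)}(s) = 1/s$ for real $s>0$. As $\mathcal{M}_{|U_i|}(s)=\E[|U_i|^{s-1}]$ is strictly positive and available in closed form as a ratio of Gamma functions, one solves $\mathcal{M}_R(s) = 1/(s\,\mathcal{M}_{|U_i|}(s))$; in particular every moment $\E[R^k]=1/\big((k+1)\,\E[|U_i|^k]\big)$ is pinned down. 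Since $R$ is supported on the compact interval $[0,1]$, the Hausdorff moment problem guarantees that these moments determine its law uniquely, whence $R$ --- and therefore the copula --- is unique. The only genuine subtlety is this deconvolution step, i.e.\ arguing that the factorized identity admits a single solution; boundedness of $R$ makes it routine via moments and lets me avoid any delicate injectivity claim for the Mellin transform on a strip.
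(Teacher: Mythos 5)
Your proof is correct, and it takes a genuinely different route from the paper's, although both start from the same reduction: since the angular part $\bsU_n^+$ is fixed once $p$ and $n$ are, the copula is determined by the law of $R$ in the representation $\bsX^+ = R \cdot \bsU_n^+$, so it suffices to show that at most one radial law is compatible with $R\,|U_i| \sim U(0,1)$. From there the paper passes to logarithms, writing $\log(X_i^p) = \log(R^p) + \log(U_i^p)$, and argues via characteristic functions: the characteristic function of $\log(R^p)$ is the ratio of the two known characteristic functions of $\log(X_i^p)$ and $\log(U_i^p)$, hence the law of $R$ is determined. You instead work multiplicatively with moments, obtaining $\E[R^k] = 1/\bigl((k+1)\,\E[|U_i|^k]\bigr)$ for all $k$, and -- having first established $R \le 1$ a.s.\ from the essential supremum of $|U_i|$ and independence, an observation the paper uses without proof only later, in Theorem \ref{th:necessary} -- you invoke the Hausdorff moment problem on $[0,1]$. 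Your version is in one respect tighter: the paper's ratio-of-characteristic-functions step implicitly requires the denominator to be nonvanishing, which does hold here (the characteristic function of $\log(U_i^p)$ is a ratio of Gamma functions by the Beta law of $U_i^p$ from Theorem \ref{lpu-marginal}, and the Gamma function has no zeros) but is not checked in the paper, and characteristic functions can in general vanish on intervals; your moment argument sidesteps any such injectivity issue at the modest cost of the boundedness lemma. The trade-off is that the characteristic-function route would also apply to unbounded radii and is slightly shorter, while yours is fully self-contained and elementary given the compact support of $R$.
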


\begin{proof}
We consider the case of a random vector $\bsX^+ = (X_1,\ldots,X_n)$ with a positive $L_p$-norm spherical copula. According to Theorem \ref{lpu-marginal} the marginals of the positive $L_p$-norm uniform distribution $\bsU_n^+ = (U_1,\ldots,U_n)$ have a transformed Beta distribution, i.e. $U_i^p \sim Beta\left(\frac{1}{p},\frac{n-1}{p}\right)$. From the representation $\bsX^+ = R \cdot \bsU_n^+$ we get for the marginals of $\bsX^+$ the identity $X_i = R\cdot U_i$ and thus
$$
\log(X_i^p) = \log(R^p) + \log(U_i^p).
$$
Because of the independence of $R$ and $U_i$ it follows that the characteristic function of $\log(R^p)$ must be the ratio of the two characteristic functions of $\log(X_i^p)$ and $\log(U_i^p)$, which both have a known distribution. Thus the distribution of  $\log(R^p)$ and therefore also the distribution of $R$ is uniquely determined. 
\end{proof}

We also easily get the following necessary condition for the existence.

\begin{theorem} \label{th:necessary}
$L_p$-norm spherical copulas can only exist for $p \ge n-1$.
\end{theorem}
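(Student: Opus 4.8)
The plan is to work with the positive version of the copula, i.e.\ a random vector $\bsX^+ = (X_1,\ldots,X_n)$ whose distribution is $L_p$-norm spherical on $\R_+^n$ with $X_i \sim U(0,1)$, together with its polar decomposition $\bsX^+ = R \cdot \bsU_n^+$ and radial part $R = \|\bsX^+\|_p$. The whole argument rests on evaluating the single quantity $\E(R^p)$ in two ways: an exact identity on one side and an a priori bound on the other.

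First I would establish that $R \le 1$ almost surely. Since $X_i \sim U(0,1)$ for every $i$, the support of $\bsX^+$ is contained in $[0,1]^n$, so its density $2^n g(\|\bsx\|_p)$ vanishes almost everywhere outside $[0,1]^n$. As this density depends on $\bsx$ only through $\|\bsx\|_p$, it is constant on each positive $L_p$-sphere $\{\bsx \in \R_+^n : \|\bsx\|_p = r\}$. For any $r > 1$ that sphere contains a relatively open piece lying outside $[0,1]^n$ (e.g.\ a neighbourhood of the point $(r,0,\ldots,0)$, whose first coordinate exceeds $1$), and a constant that vanishes on a set of positive surface measure must vanish; hence $g(r) = 0$. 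Thus $g$ is zero for all $r > 1$ and $R = \|\bsX^+\|_p \le 1$ a.s.\ — this is exactly the support restriction already recorded by the indicator $\one_{[\|\bsx\|_p<1]}$ in \eqref{lpc-def}. In particular $R^p \le 1$ a.s., so $\E(R^p) \le 1$.

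The key step is then to compute $\E(R^p)$ exactly. Writing $R^p = \|\bsX^+\|_p^p = \sum_{i=1}^n X_i^p$ and taking expectations, the uniform marginals give
\begin{equation*}
\E(R^p) = \sum_{i=1}^n \E(X_i^p) = n \int_0^1 x^p \dd x = \frac{n}{p+1}.
\end{equation*}
Combining this identity with $\E(R^p) \le 1$ yields $n/(p+1) \le 1$, that is $p \ge n-1$, which is the claim.

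The computation itself is routine; the step that genuinely needs care is the a priori bound $R \le 1$, because $[0,1]^n$ contains points with $\|\bsx\|_p$ as large as $n^{1/p}$, so $R \le 1$ is not a mere marginal constraint but relies on the spherical symmetry propagating the vanishing of $g$ from the axis directions to the whole sphere. I would also note that the bound is tight: at $p = n-1$ one has $\E(R^p) = 1$ together with $R^p \le 1$, which forces $R \equiv 1$, so the copula degenerates into the $L_p$-uniform distribution on the sphere itself; this matches the borderline behaviour and shows $p \ge n-1$ cannot be improved. An alternative route avoiding the explicit support computation of $\E(R^p)$ is to use $X_i^p = R^p U_i^p$ with $U_i^p \sim Beta(1/p,(n-1)/p)$ from Theorem \ref{lpu-marginal}: independence of $R$ and $U_i$ gives $\E(R^p) = \E(X_i^p)/\E(U_i^p) = (p+1)^{-1}/n^{-1} = n/(p+1)$, recovering the same inequality, but this variant still needs the boundedness $R^p \le 1$ to close the argument.
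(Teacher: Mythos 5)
Your proof is correct in substance and reaches the paper's inequality through the same pivotal quantity, $\E(R^p)$, but your derivation of its exact value is genuinely more elementary than the paper's. The paper writes $X_i = R\,U_i$, invokes Theorem \ref{lpu-marginal} to get $U_i^p \sim Beta(1/p,(n-1)/p)$, notes $X_i^p \sim Beta(1/p,1)$, and uses the independence of $R$ and $U_i$ to solve $\frac{1}{p+1} = \E(R^p)\cdot\frac{1}{n}$ — exactly the ``alternative route'' you sketch at the end. Your primary route instead observes $R^p = \|\bsX^+\|_p^p = \sum_{i=1}^n X_i^p$, so linearity and the uniform marginals give $\E(R^p) = n/(p+1)$ with no appeal to the angular marginal law or to the independence of $R$ and $\bsU_n^+$ at this step; this is a cleaner derivation of the same identity. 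Your closing remark that $p = n-1$ forces $R \equiv 1$ also correctly anticipates the degenerate case in Theorem \ref{thm:lps-multi}.

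One caveat concerns the step $R \le 1$ a.s., which the paper simply asserts (``necessarily $R \le 1$'') and which you rightly identify as the point needing care. Your justification, however, presupposes that the copula is absolutely continuous with density $2^n g(\|\bsx\|_p)$, whereas Definition \ref{plp-copula} rests on the general stochastic representation $\bsX^+ = R \cdot \bsU_n^+$ with an arbitrary radial law; singular cases do occur (at $p = n-1$ the unique copula has $R \equiv 1$ and no density on $\R_+^n$), so a proof of the theorem in full generality cannot route through the density generator $g$. The gap is easily closed using the independence structure you already have: since $U_1^p \sim Beta(1/p,(n-1)/p)$ has support $[0,1]$, we have $P\bigl(U_1 > (1+\varepsilon)^{-1}\bigr) > 0$ for every $\varepsilon > 0$; if $P(R > 1) > 0$, then $P(R > 1+\varepsilon) > 0$ for some $\varepsilon > 0$, and independence of $R$ and $\bsU_n^+$ would give $P(X_1 > 1) \ge P(R > 1+\varepsilon)\,P\bigl(U_1 > (1+\varepsilon)^{-1}\bigr) > 0$, contradicting $X_1 \sim U(0,1)$. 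With that substitution your argument covers all positive $L_p$-norm spherical copulas, and via the bijections described in Section \ref{S1} the other variants as well.
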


\begin{proof}
Analogously to Theorem \ref{th:unique} we use the representation $X_i = R\cdot U_i$. We know that  $U_i^p \sim Beta\left(1/p,(n-1)/p\right)$ and that $X_i \sim U(0,1)$ and hence $X_i^p \sim Beta(1/p,1)$. 
The mean of a $Beta(\alpha,\beta)$ distribution is given by $\alpha/(\alpha+\beta)$. From the independence of $R$ and $U_i$ we thus derive from the identity $E(X_i^p) = E(R^p) \cdot E(U_i^p)$
the equation
$$
\frac{1}{p+1} = E(R^p) \cdot \frac{1}{n}.
$$
As necessarily $R \le 1$ and thus also $E(R^p) \le 1$ we get the necessary condition $p \ge n-1$.
\end{proof}
We will see that this condition is also sufficient to get an $L_p$-norm spherical copula in Section~\ref{sec:multivariate}.

\section{Bivariate $L_p$-norm spherical copulas} \label{sec:bivariate}

We first consider the bivariate case where we can generalize the results of \citet{perlman-wellner-11} from the case $p=2$ to arbitrary $p \in [1,\infty)$. 
\citet{perlman-wellner-11} considered the question whether there exists an $L_2$-norm spherical distribution with marginals that are uniform on $[-1,1]$. The answer is affirmative in dimensions $n = 2$ and $n=3$. In $\R^3$ the corresponding distribution is just the uniform distribution $\bsU_3$ on the sphere. In dimension $n=2$ the distribution has the density
$$
f(x,y) = \frac{1}{2 \pi \sqrt{1-x^2-y^2}} \one_{[x^2+y^2<1]},
$$
i.e.\ the corresponding positive $L_2$-norm spherical distributions with $U(0,1)$ uniform marginal distributions are the uniform distribution $\bsU_3^+$ and the distribution
with density 
$$c^+(x,y) = \frac{2}{\pi \sqrt{1-x^2-y^2}} \one_{[x^2+y^2<1]} \one_{[x > 0,y > 0]},
$$
for $n=3$ and $n=2$, respectively. We generalize these results for $p>1$ as follows.

\begin{theorem} \label{th1}When $n=2$, the density of the $L_p$-norm spherical copula for $p>1$ is given by 
$$
f(x,y) = \frac{1}{\Gamma(1-1/p) \Gamma(1+1/p) (1-x^p-y^p)^{1/p}} \one_{[x^p+y^p<1]},
$$
for $x>0$ and $y>0$.
\end{theorem}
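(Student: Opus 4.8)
The plan is to invoke the uniqueness result of Theorem~\ref{th:unique}: since an $L_p$-norm spherical copula is unique once it exists, it suffices to write down a density of the prescribed form and verify that its marginals are $U(0,1)$. By Definition~\ref{plp-copula} and \eqref{lpc-def}, for $n=2$ any positive $L_p$-norm spherical copula must have a density of the shape
$$
c^+(x,y) = 4\,g\bigl((x^p+y^p)^{1/p}\bigr)\,\one_{[x^p+y^p<1]}, \qquad (x,y)\in(0,1)^2,
$$
for some generator $g$. The whole problem therefore reduces to determining $g$ so that each marginal is uniform, i.e.\ so that $\int_0^{(1-x^p)^{1/p}} 4\,g((x^p+y^p)^{1/p})\dd y = 1$ for every $x\in(0,1)$.

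Guided by the case $p=2$ recalled above (where $g(r)\propto(1-r^2)^{-1/2}$), I would make the ansatz $g(r) = c\,(1-r^p)^{-1/p}$ and show it works, fixing the constant $c$ at the end. The decisive computation is the marginal integral
$$
4c\int_0^{(1-x^p)^{1/p}} \bigl(1-x^p-y^p\bigr)^{-1/p}\dd y.
$$
Here I would substitute $t = y^p/(1-x^p)$, so that $y=(1-x^p)^{1/p}t^{1/p}$ and $1-x^p-y^p = (1-x^p)(1-t)$. The Jacobian contributes a factor $(1-x^p)^{1/p}$ while the integrand contributes $(1-x^p)^{-1/p}$; these cancel exactly, and the integral collapses to the $x$-independent expression $\tfrac1p\int_0^1 t^{1/p-1}(1-t)^{-1/p}\dd t = \tfrac1p B(1/p,\,1-1/p)$. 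This $x$-independence is precisely the mechanism that forces the marginal to be uniform, and it is what singles out the exponent $-1/p$ in the ansatz.

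It then remains to identify the normalising constant. Using $B(1/p,1-1/p)=\Gamma(1/p)\Gamma(1-1/p)$ together with the recursion $\Gamma(1+1/p)=\tfrac1p\Gamma(1/p)$, the requirement that the marginal integrate to $1$ gives $c = 1/\bigl(4\,\Gamma(1-1/p)\Gamma(1+1/p)\bigr)$, whence $c^+=4c\,(1-x^p-y^p)^{-1/p}$ is exactly the claimed density. Since $c^+$ depends on $(x,y)$ only through $x^p+y^p=\|(x,y)\|_p^p$ it is genuinely $L_p$-spherical on $\R_+^2$, and by symmetry the second marginal is uniform as well, so by Theorem~\ref{th:unique} this is the copula.

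The main obstacle, and the place where the hypothesis $p>1$ enters, is the integrability at the boundary sphere $x^p+y^p=1$: the Beta integral $B(1/p,1-1/p)$ is finite only when $1-1/p>0$, i.e.\ when $p>1$. Equivalently, the radial factor $(1-r^p)^{-1/p}$ must be integrable against the surface measure of the $L_p$-circle, which fails at $p=1$. This is consistent with the necessary condition $p\ge n-1=1$ from Theorem~\ref{th:necessary}, the endpoint $p=1$ being exactly the degenerate case excluded here. A cleaner but less self-contained alternative would be to first derive the radial law $R^p\sim Beta(2/p,\,1-1/p)$ from Lemma~\ref{lemma-beta}(b) applied to $X_i^p=R^p U_i^p$ with $U_i^p\sim Beta(1/p,1/p)$, and then read off the density; the same constraint $1-1/p>0$ reappears as the requirement that this Beta law be non-degenerate.
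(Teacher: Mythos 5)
Your proposal is correct and follows essentially the same route as the paper's proof: the paper likewise starts from the form $c^+(x,y)=4g((x^p+y^p)^{1/p})$ of \eqref{lpc-def}, makes the equivalent ansatz $h(t)=c\,t^{-1/p}$ (i.e.\ $g(r)=c(1-r^p)^{-1/p}$), reduces the marginal integral to the Beta integral $\tfrac1p B(1/p,1-1/p)=\Gamma(1+1/p)\Gamma(1-1/p)$, and fixes $c$ so the marginals are uniform. Your only departures are cosmetic: you merge the paper's two substitutions into the single change of variable $t=y^p/(1-x^p)$ and make the appeal to Theorem~\ref{th:unique} explicit, while your sketched Beta-algebra alternative via Lemma~\ref{lemma-beta}(b) is precisely the argument the paper deploys later for Theorem~\ref{thm:lps-multi}.
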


\begin{proof}
To generalize the result of \citet{perlman-wellner-11} for $p=2$  to the case of arbitrary $p > 1$, we  mimick the proof given there. Thus we assume that the density has the form $c^+(x,y) = 4 g((|x|^p+|y|^p)^{1/p})$ for $|x|^p+|y|^p <1$ as in \eqref{lpc-def}. Using first the transformations $h(t) = g((1-t)^{1/p})$ and then the change of variable $u = y/(1-x^p)^{1/p}$
we get  the marginal distribution $c^+_1(x)$ by integrating over $y \in (0,1)$ for $x > 0,$
\begin{align*}
c^+_1(x) & = 4 \int_{0}^{1} g((x^p+y^p)^{1/p}) \one_{[x^p+y^p<1]}\dd y\\
& = 4  \int_0^{(1-x^p)^{1/p}} h(1-x^p-y^p) \dd y\\
& = 4  \int_0^1 h(1-x^p-u^p(1-x^p)) (1-x^p)^{1/p} \dd u\\
& = 4 (1-x^p)^{1/p}  \int_0^1 h((1-u^p)(1-x^p)) \dd u.
\end{align*}
Assuming that $h(t) = c \, t^{-1/p}$ for some constant $c > 0$ and transforming $z = u^p$, we get
\begin{align*}
c^+_1(x) ={}& 4c \,  \int_0^1 (1-u^p)^{-1/p} \dd u
= 4 c   \int_0^1 (1-z)^{-1/p} \frac 1 p z^{\frac 1 p -1} \dd z\\
={}& 4 c \frac{ \Gamma(1-1/p) \Gamma(1/p) }{p} = 4 c \Gamma(1-1/p) \Gamma(1+1/p) .
\end{align*}

We thus find the appropriate constant $c$ to obtain a uniform distribution on $[0,1]$ for the marginals and 
$$g\left(t^{1/p}\right) = h(1-t) =  \frac{1}{4  \Gamma(1-1/p) \Gamma(1+1/p)}\, (1-t)^{-1/p}$$
 and thus for the density
$$
c^+(x,y) = 4 g(\|(x,y) \|_p) = \frac{1}{\Gamma(1-1/p) \Gamma(1+1/p) (1-x^p-y^p)^{1/p}}, \quad x,y > 0, \ x^p + y^p < 1.
$$
\end{proof}

\subsection{Some properties of the $L_p$-norm spherical copula when $n=2$ }
From Theorem \ref{th1}, and using a result from \citet{gupta-song-97}, we can also deduce the distribution of the radius $R$, which has the p.d.f.
$$
f_R(r) = \frac{4\cdot \Gamma(1/p)^2}{p \cdot \Gamma(2/p)} \cdot r \cdot g(r^p) = \frac{\Gamma(1/p)}{ \Gamma(2/p)\Gamma(1-1/p)} \cdot r \cdot (1-r^p)^{-1/p}.
$$
%Using the transformation $z = r^p$ we get 
%\begin{align*}
%\int_0^1 r  \cdot (1-r^p)^{-1/p} dr & = \int_0^1 z^{1/p}  \cdot (1-z)^{-1/p} \frac{1}{p} z^{1/p-1}dz\\
%& = \frac{1}{p}  \int_0^1 z^{2/p-1} (1-z)^{-1/p} dz = \frac{1}{p}  \frac{\Gamma(2/p) \Gamma(1-1/p)}{\Gamma(1/p+1)} =  \frac{\Gamma(2/p) \Gamma(1-1/p)}{\Gamma(1/p)} 
%\end{align*}
%and thus $\int f_R(r) dr = 1$ for 
%$$
%c = \frac{p}{4 \Gamma(1-1/p){\Gamma(1/p)}},
%$$
%hence
%$$
%f_R(r) = \frac{\Gamma(1/p)}{ \Gamma(2/p)\Gamma(1-1/p)} \cdot r  \cdot (1-r^p)^{-1/p}.
%$$
From a simple density transformation, one can easily see that $R^p$ has a Beta distribution with parameters $2/p$ and $1-1/p$.  Thus, for the $p$-th order moment of $R$, we obtain
$$ E(R^p) = \frac 2 {p+1},$$
which is a decreasing function of $p$ with a limit of $E(R^p) = 1$ for $p \to 1$. This means that the distribution gets more and more concentrated at the boundary of the unit ball if $p \to 1$.
Indeed, in the limit $p =1$ we get as distribution the uniform distribution on the set $\{(x,y) \in [0,1]^2: |x| + |y| = 1\},$ which obviously has uniform marginals and is the well-known lower Fr\'echet bound
with 
$$
\bsX^+ = (X_1^+,X_2^+) = (U,1-U)
$$
for some uniform $U \sim U(0,1)$ with correlation coefficient $\rho=-1$. In the limit $p \to \infty$ we approximate a uniform distribution on the square, which can be considered as an $L_\infty$ symmetric copula as discussed in Section \ref{S4}. Indeed, we thus get a family of copulas, which interpolates between independence and complete negative dependence. We can compute explicitly the correlation coefficient.

\begin{theorem}
If $\bsX^+ = (X_1^+,X_2^+)$ has a positive $L_p$-norm symmetric copula, then its correlation coefficient is given by
\begin{equation}\label{corr}
\rho_p =  \frac{4 \Gamma(2/p)^2}{\Gamma(1/p) \cdot  \Gamma(3/p)} - 3
\end{equation}
with $\lim_{p\to1} \rho_p = -1$, $\lim_{p\to\infty} \rho_p = 0$ and $p \mapsto \rho_p$ increasing.
\end{theorem}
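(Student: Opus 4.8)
The plan is to reduce everything to the single cross-moment $E(X_1^+ X_2^+)$ and evaluate it with the explicit density from Theorem \ref{th1}. Since the marginals are $U(0,1)$, both means equal $1/2$ and both variances equal $1/12$, so that
$$\rho_p = \frac{E(X_1^+ X_2^+) - 1/4}{1/12} = 12\,E(X_1^+ X_2^+) - 3.$$
It thus suffices to show $E(X_1^+ X_2^+) = \Gamma(2/p)^2/(3\Gamma(1/p)\Gamma(3/p))$. First I would write this moment as the integral of $xy\,c^+(x,y)$ over $\{x,y>0,\ x^p+y^p<1\}$ and substitute $u = x^p$, $v = y^p$. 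This turns the integrand into $\tfrac{1}{p^2}\,u^{2/p-1} v^{2/p-1}(1-u-v)^{-1/p}$ on the simplex $\{u,v>0,\ u+v<1\}$, which is exactly a Dirichlet integral with parameters $a=b=2/p$ and $c=1-1/p$; its value is $\Gamma(2/p)^2\Gamma(1-1/p)/\Gamma(1+3/p)$. Using $\Gamma(1+3/p)=\tfrac{3}{p}\Gamma(3/p)$ and $\Gamma(1+1/p)=\tfrac{1}{p}\Gamma(1/p)$, the prefactor $1/(\Gamma(1-1/p)\Gamma(1+1/p))$ cancels the stray $\Gamma(1-1/p)$ and the powers of $p$, leaving precisely the claimed closed form and hence \eqref{corr}. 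As a cross-check one can instead use $X_i^+ = R\,U_i$, so $E(X_1^+ X_2^+) = E(R^2)\,E(U_1 U_2)$, with $R^p\sim Beta(2/p,1-1/p)$ from the preceding subsection and $U_1^p\sim Beta(1/p,1/p)$ from Theorem \ref{lpu-marginal}, evaluating the two Beta moments via Lemma \ref{lemma-beta}; both routes agree.

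The two limits are then immediate. At $p=1$ one gets $\rho_1 = 4\Gamma(2)^2/(\Gamma(1)\Gamma(3)) - 3 = 2 - 3 = -1$, while for $p\to\infty$ I would use $\Gamma(z)\sim 1/z$ as $z\to 0^+$ to obtain $\Gamma(2/p)\sim p/2$, $\Gamma(1/p)\sim p$ and $\Gamma(3/p)\sim p/3$, so that the ratio tends to $4(p/2)^2/(p\cdot p/3)=3$ and $\rho_\infty=0$.

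The hard part is the monotonicity. Writing $t = 1/p\in(0,1)$ and $\varphi(t) = \Gamma(2t)^2/(\Gamma(t)\Gamma(3t))$, monotonicity of $\rho_p = 4\varphi(t)-3$ in $p$ is equivalent to $\varphi$ being strictly decreasing in $t$ (as $t$ is decreasing in $p$). I would take the logarithmic derivative,
$$\frac{d}{dt}\log\varphi(t) = 4\psi(2t) - \psi(t) - 3\psi(3t),$$
where $\psi = \Gamma'/\Gamma$ is the digamma function, and then use the integral representation $\psi(b)-\psi(a)=\int_0^1 (w^{a-1}-w^{b-1})/(1-w)\dd w$. Since the coefficients $4,-1,-3$ sum to zero, this collapses to $\int_0^1 (3w^{3t-1}-4w^{2t-1}+w^{t-1})/(1-w)\dd w$; the substitution $w = x^{1/t}$ then yields
$$\frac{d}{dt}\log\varphi(t) = \frac{1}{t} \int_0^1 \frac{(3x-1)(x-1)}{1-x^{1/t}}\dd x.$$

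The crucial observation I would exploit is that the numerator $(3x-1)(x-1)$ has zero integral over $(0,1)$ and changes sign exactly once there, being positive on $(0,1/3)$ and negative on $(1/3,1)$, whereas the weight $1/(1-x^{1/t})$ is strictly increasing in $x$. Because the numerator integrates to zero, I may subtract the constant $1/(1-(1/3)^{1/t})$ from the weight without changing the integral; the resulting modified weight is negative on $(0,1/3)$ and positive on $(1/3,1)$, hence of the opposite sign to the numerator throughout $(0,1)$. Therefore the integrand is $\le 0$ everywhere with strict inequality on a set of positive measure, the integral is strictly negative, and $\varphi$ is strictly decreasing, which gives the claimed strict monotonicity of $\rho_p$. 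The only subtleties I anticipate are checking integrability at $x=1$ (both numerator and $1-x^{1/t}$ vanish linearly, so the integrand stays bounded) and justifying differentiation under the integral sign; these are routine.
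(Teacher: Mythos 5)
Your proof is correct, and while it shares the overall skeleton of the paper's argument (reduce to the cross-moment via $\rho_p = 12\,E(X_1^+X_2^+)-3$, evaluate it in closed form, then handle the limits and the monotonicity of $x \mapsto \Gamma(2x)^2/(\Gamma(x)\Gamma(3x))$ via the digamma function), two of your steps take a genuinely different route. For the cross-moment, the paper substitutes $x = u(1-y^p)^{1/p}$ to factor the double integral into two one-dimensional Beta integrals, whereas you substitute $u=x^p$, $v=y^p$ and invoke the Dirichlet integral with parameters $(2/p,\,2/p,\,1-1/p)$; your route is more direct and would generalize painlessly to higher moments and higher dimensions. For monotonicity, the paper also reduces to the sign of $g(x)=4\psi(2x)-\psi(x)-3\psi(3x)$, but then uses the series representation $\psi(x)=-\gamma+\sum_{n\geq 0}\bigl(\tfrac{1}{n+1}-\tfrac{1}{n+x}\bigr)$, under which the three terms combine into the manifestly negative closed form $g(x) = -\sum_{n\geq 0} 2nx/\bigl((n+x)(n+2x)(n+3x)\bigr)$ --- very short, but reliant on that algebraic coincidence in the numerator. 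Your argument instead uses the integral representation of $\psi$, the substitution $w=x^{1/t}$, and a Chebyshev-type single-crossing argument: the factor $(3x-1)(x-1)$ has zero integral and a single sign change at $x=1/3$, the weight $1/(1-x^{1/t})$ is increasing in $x$, so centering the weight at its value at $x=1/3$ makes the integrand pointwise nonpositive and the integral strictly negative. This is more conceptual and more robust (it only uses that the coefficients $4,-1,-3$ sum to zero and pair with increasing arguments in a single-crossing pattern), at the modest cost of the endpoint integrability checks at $x=1$, which you correctly flag and which do work out since numerator and denominator both vanish linearly there. One small caveat: in your cross-check via $E(X_1^+X_2^+) = E(R^2)E(U_1U_2)$, Lemma \ref{lemma-beta} does not directly deliver $E(U_1U_2)$, because on the sphere $U_2=(1-U_1^p)^{1/p}$ is a deterministic function of $U_1$ rather than independent of it; that moment is a Beta integral in its own right (and the check does go through when computed that way), so the wording there should be adjusted even though nothing load-bearing is affected.
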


\begin{proof}
Using the transformation $x = u \cdot (1-y^p)^{1/p}$ we get for the expectation $E(X_1^+ \cdot X_2^+)$ the expression
\begin{align*}
\int xy c^+(x,y) \dd x \dd y & =  \frac{1}{\Gamma(1-1/p) \Gamma(1+1/p)} \int_0^1  \int_0^{(1-y^p)^{1/p}}  \frac{xy}{(1-x^p-y^p)^{1/p}} \dd x \dd y\\
& = \frac{1}{\Gamma(1-1/p) \Gamma(1+1/p)} \int_0^1 \int_0^1  \frac{uy(1-y^p)^{1/p}}{((1-y^p)(1-u^p))^{1/p}} (1-y^p)^{1/p} \dd u \dd y\\
& = \frac{1}{\Gamma(1-1/p) \Gamma(1+1/p)} \int_0^1 y  (1-y^p)^{1/p}  \dd y  \int_0^1  u (1-u^p)^{-1/p} \dd u\\
& = \frac{1}{\Gamma(1-1/p) \Gamma(1+1/p)} \cdot \frac{1}{p} \frac{ \Gamma(1+1/p) \Gamma(2/p) }{\Gamma(1+3/p)} \cdot \frac{1}{p} \frac{ \Gamma(1-1/p) \Gamma(2/p) }{\Gamma(1+1/p)} \\
& =  \frac{ \Gamma(2/p)^2}{\Gamma(1/p) \cdot 3 \cdot \Gamma(3/p)}.
\end{align*}
As $E(X_i^+) = 1/2$ and $Var(X_i^+) = 1/12$ we have
$$
\rho_p = \frac{E(X_1^+ \cdot X_2^+) - E(X_1^+) E(X_2^+)}{Var(X_1^+)} = 12 E(X_1^+ \cdot X_2^+) - 3,
$$
which yields equation \eqref{corr}. Inserting $p = 1$ we immediately get $\rho_1 = -1$ and using $\Gamma(x) = \Gamma(x+1)/x$ we derive
$$
\lim_{p\to\infty} \rho_p = \lim_{p\to\infty} \frac{\Gamma(1+ 2/p)^2}{\Gamma(1+1/p) \cdot  \Gamma(1+ 3/p)/3} - 3 = 0.
$$
Finally, we show that $p \mapsto \rho_p$ is increasing, or equivalently that
$$
x \mapsto  f(x) := \frac{\Gamma(2x)^2}{\Gamma(x) \cdot  \Gamma(3x)}
$$
is decreasing for $0 < x < 1$. Taking the derivative of the logarithm we get
$$
g(x) := \frac{\dd}{ \dd x} \log(f(x)) = 4 \psi(2x) - \psi(x) - 3\psi(3x),
$$
where $\psi(x) = \Gamma'(x)/\Gamma(x)$ is the so-called Digamma function. Thus it is sufficient to show that this function is negative. To do so, we use the following representation 
of the Digamma function $\psi$ that can be found as formula 6.3.16 in \cite{abramowitz2006}. We have
$$
\psi(x) = - \gamma + \sum_{n=0}^\infty \left(  \frac{1}{n+1} -   \frac{1}{n+x}  \right),
$$
where $\gamma$ is the Euler-Mascheroni constant. Thus we get
\begin{align*}
g(x) & = - \sum_{n=0}^\infty \left(  \frac{4}{n+2x} -   \frac{1}{n+x} -  \frac{3}{n+3x}\right)\\
& = - \sum_{n=0}^\infty  \frac{2nx}{(n+2x)(n+x)(n+3x)} < 0, \qquad \text{for all } 0 < x < 1.
\end{align*}
\end{proof}

\subsection{A Conjecture}

As announced in the introduction, we conjecture that the bivariate $L_p$-norm spherical copulas play an important role in the problem of finding sharp dependence uncertainty bounds for the generalized energy distance. Specifically, we consider the following optimization problem:
\begin{equation}\label{MAXsco}\max_{C\in\CC} S_\beta(C,C)\end{equation}
where $\CC$ denotes the set of all possible bivariate copulas and where we recall that $S_\beta(C,C)$ denotes the  generalized expected distance between two independent samples $\bsX$ and $\bsY$ of the copula  $C$ defined formally as $
S_\beta(C,C) = \E\left( \| \bsX - \bsY \|^\beta_2 \right).$  In the case $\beta=1$, \cite{BM2020} show that the bivariate spherical copula $C$ is the solution to  \eqref{MAXsco}. In the general case of $\beta \neq 1$, we do not have a solution of the problem. However, our numerical experiments led us to formulate the following conjecture.

\begin{conjecture} The solution to \eqref{MAXsco} is obtained for the circular bivariate $L_p$-norm spherical copula with $p=3-\beta$ for arbitrary $\beta \in (0,2)$.
\end{conjecture}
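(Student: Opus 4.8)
The plan is to treat \eqref{MAXsco} as the maximization of a \emph{concave} functional over the convex set $\CC$ of bivariate copulas and then to verify that $C^\circ$ with $p=3-\beta$ satisfies the induced first-order optimality condition. Write $Q(C)=S_\beta(C,C)=\int_{[0,1]^2}\int_{[0,1]^2}\|\bsx-\bsy\|_2^\beta\dd C(\bsx)\dd C(\bsy)$. For $\beta\in(0,2)$ the kernel $\|\bsx-\bsy\|_2^\beta$ is strictly conditionally negative definite (this is Schoenberg's characterization of $\|\cdot\|_2^\beta$ as a negative definite kernel), i.e.\ $\int\int\|\bsx-\bsy\|_2^\beta\dd\nu(\bsx)\dd\nu(\bsy)\le 0$ for every signed measure $\nu$ with $\nu([0,1]^2)=0$, strictly unless $\nu=0$. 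Since the difference $\nu=C-C^\circ$ of two copulas has total mass zero, $Q$ is concave along every segment in $\CC$, so a candidate $C^\circ$ is the global maximizer precisely when its Gâteaux derivative is nonpositive in every feasible direction, i.e.\ when
\[
\langle\phi,\,C-C^\circ\rangle\le 0\quad\text{for all }C\in\CC,\qquad \phi(\bsx):=\E\!\left(\|\bsx-\bsY\|_2^\beta\right),\ \bsY\sim C^\circ .
\]
Strictness of the negative definiteness would moreover yield uniqueness of the maximizer.

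The next step is to recast this condition as a \emph{separability} statement for the potential $\phi$. Because every copula shares the same uniform marginals, if one can produce a function $a:(0,1)\to\R$ with
\[
\phi(x,y)\le a(x)+a(y)\ \text{ on }[0,1]^2,\qquad \phi(x,y)=a(x)+a(y)\ \text{ on }\operatorname{supp}(C^\circ),
\]
then for any copula $C$ one gets $\int\phi\dd C\le\int(a(x)+a(y))\dd C=\int_0^1 a\dd x+\int_0^1 a\dd y=\int\phi\dd C^\circ$, which is exactly $\langle\phi,C-C^\circ\rangle\le 0$; exchangeability of $C^\circ$ is what lets me use the same $a$ in both coordinates. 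Since $C^\circ$ has a density that is strictly positive on the open inscribed $L_p$-ball $B=\{(x,y):\|(2x-1,2y-1)\|_p<1\}$, the equality must hold throughout $B$, and an identity $\phi(x,y)=a(x)+a(y)$ on a two-dimensional open set is equivalent to $\partial_x\partial_y\phi\equiv0$ on $B$.

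The crux is to verify $\partial_x\partial_y\phi\equiv0$ on $B$ exactly when $p=3-\beta$. Differentiating under the integral sign,
\[
\frac{\partial^2\phi}{\partial x\,\partial y}(x,y)=\beta(\beta-2)\,\E\!\left[(x-Y_1)(y-Y_2)\big((x-Y_1)^2+(y-Y_2)^2\big)^{\beta/2-2}\right],
\]
so the whole conjecture reduces to the integral identity $\int_B (x-u)(y-v)\big((x-u)^2+(y-v)^2\big)^{\beta/2-2}c^\circ(u,v)\dd u\dd v=0$ for $(x,y)\in B$, where by Theorem~\ref{th1} the density is $c^\circ(u,v)\propto\big(1-\|(2u-1,2v-1)\|_p^{\,p}\big)^{-1/p}$; after centering and rescaling this is a statement purely about the $L_p$-spherical law with density $\propto(\tfrac{1}{2^p}-|u|^p-|v|^p)^{-1/p}$. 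In the matched case $\beta=1$, $p=2$, rotational symmetry makes $\phi$ radial and the identity collapses to the assertion that $\phi$ is quadratic inside the ball, recovering \cite{BM2020}; this is the consistency check. I expect this identity to be the main obstacle: for $\beta\ne1$ the energy kernel is Euclidean while the symmetry of $C^\circ$ is only $L_p$-spherical, so rotational invariance is lost and the arithmetic $p=3-\beta$ must emerge from the interplay of the two norms. To attack it I would either use the subordination representation
\[
\|\bsz\|_2^\beta=\kappa_\beta\int_{\R^2}\frac{1-\cos\langle\boldsymbol t,\bsz\rangle}{\|\boldsymbol t\|_2^{2+\beta}}\dd\boldsymbol t
\]
to convert the double integral into a Fourier-side computation involving the characteristic function of the $L_p$-spherical law, or pass to $L_p$-polar coordinates and reduce to one-dimensional Beta-type integrals whose special-function identities should single out $p=3-\beta$.

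Finally, once separability on $B$ is in hand, I would close the argument by establishing the global inequality $\phi(x,y)\le a(x)+a(y)$ on the corner regions $[0,1]^2\setminus B$. Setting $D(x,y):=a(x)+a(y)-\phi(x,y)$, one has $D\equiv0$ and hence $\nabla D=0$ on $\partial B$, while $\phi$ is subharmonic on $\R^2$ (indeed $\|\cdot\|_2^\beta$ is subharmonic in dimension two for $\beta\in(0,2)$); combining this with the boundary behaviour of $D$ and the monotonicity of $a$ should force $D\ge0$ off the ball. I expect this last step to be routine relative to the separability identity, which is where the real difficulty—and the appearance of $p=3-\beta$—lies.
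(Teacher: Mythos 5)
First, a point of classification: the statement you set out to prove is stated in the paper as a \emph{conjecture}. The authors give no proof at all; their only evidence is heuristic --- a swapping algorithm on discretized marginals whose terminal configurations visually match the support of the circular $L_{3-\beta}$-norm spherical copula (their Figures 1 and 2). So there is no paper proof to compare against, and the relevant question is whether your argument actually closes the conjecture. It does not. Your variational framework is sound as far as it goes: $Q(C)=S_\beta(C,C)$ is indeed concave on the convex set $\CC$ because $\|\bsx-\bsy\|_2^\beta$ is strictly conditionally negative definite for $\beta\in(0,2)$ (Schoenberg), the first-order condition via a separable majorant $\phi(x,y)\le a(x)+a(y)$ with equality on the support is the correct sufficiency scheme (it is exactly how the $\beta=1$ case is handled in the cited work of Bernard and M\"uller), and your reduction of the equality constraint to $\partial_x\partial_y\phi\equiv 0$ on the open inscribed $L_p$-ball $B$ is legitimate, including the formula $\partial_x\partial_y\|\bsz\|_2^\beta=\beta(\beta-2)z_1z_2\|\bsz\|_2^{\beta-4}$ and its local integrability in two dimensions.

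The genuine gap is that the decisive step --- verifying the identity $\int_B(x-u)(y-v)\bigl((x-u)^2+(y-v)^2\bigr)^{\beta/2-2}c^\circ(u,v)\dd u\dd v=0$ on $B$ precisely when $p=3-\beta$ --- is not carried out, and you say so yourself. That identity is essentially equivalent to the conjecture: everything before it is a reformulation, and everything after it (the corner-region inequality $D\ge 0$ off $B$, which additionally leans on an unproven monotonicity of $a$ and on boundary regularity of the potential of a density blowing up like $(1-r^p)^{-1/p}$) presupposes it. Worse, it is not even clear the identity is exactly true for $\beta\neq 1$: the paper's numerics only suggest that the \emph{support boundary} of the optimizer resembles an $L_{3-\beta}$-sphere, not that the optimal density equals $(1-\|\cdot\|_p^p)^{-1/p}$ pointwise, and --- as you correctly observe --- the symmetry mismatch between the Euclidean kernel and the $L_p$-spherical law removes the rotational-invariance mechanism that makes the $\beta=1$, $p=2$ case work. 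Your consistency check at $\beta=1$ and your two proposed attack routes (subordination/Fourier, $L_p$-polar Beta integrals) are reasonable research directions, and your reduction would upgrade the paper's evidence to a theorem with a uniqueness statement if the identity were established; but as submitted, the proposal is a programme with its central lemma open, not a proof.
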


In what follows, we illustrate why we believe that the conjecture is true. We solve  \eqref{MAXsco} heuristically, by first discretizing the marginal distributions with 1,000 discretization points and then by using a modified swapping algorithm (in the spirit of \citealp{puccetti2017} and \citealp{puccetti2020computation}) to approximate the solution of  \eqref{MAXsco}.

Starting from a random permutation, we display in Figure \ref{circleU01} the initial permutation and the final permutation (obtained in about 10 minutes) that is such that any swaps of two elements does not result in a strict increase in the objective function. The result for the case $\beta =1$ is displayed in Figure 1. The heuristically obtained solution looks pretty much like a bivariate projection of a uniform distribution on a ball and in the case $\beta =1$ it is proved in \cite{BM2020} that this indeed solves the optimization problem  \eqref{MAXsco}. 

\begin{figure}[!htbp] 
\begin{center}
\includegraphics[width=\textwidth,height=0.4\textwidth]{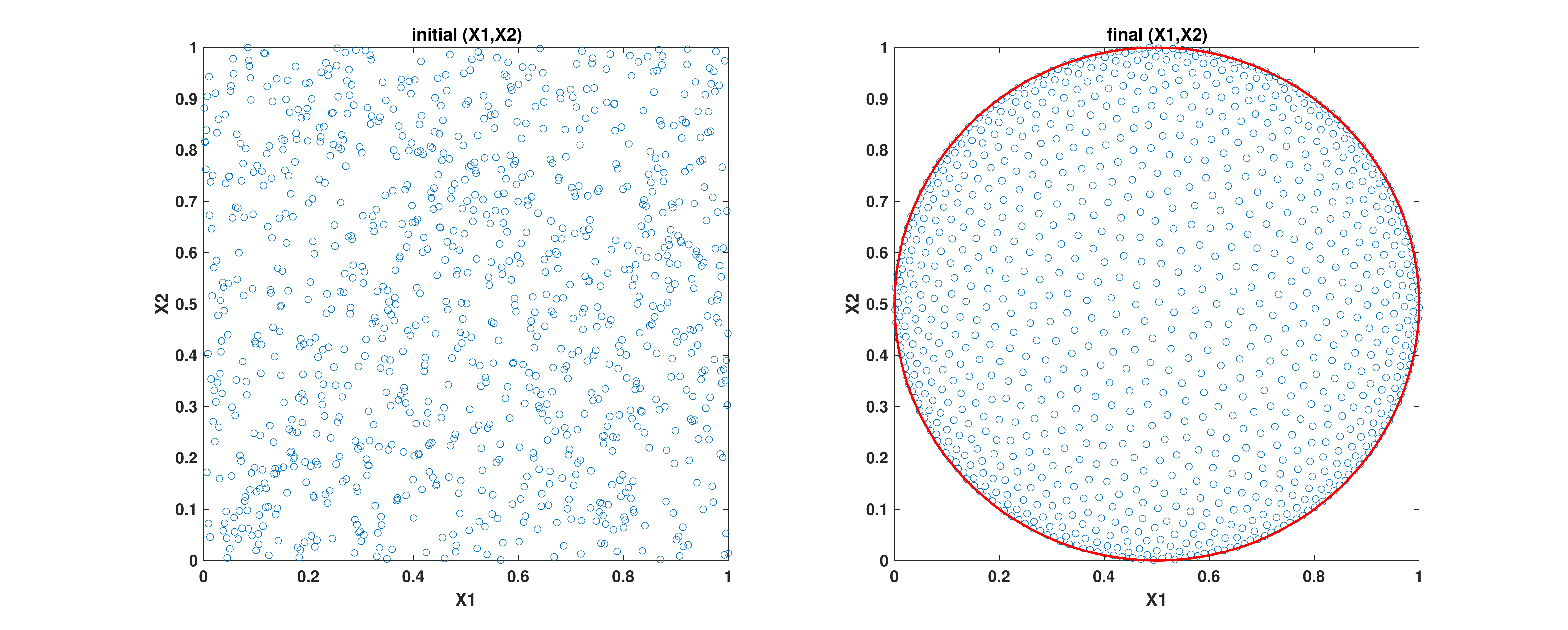}
\end{center}
\caption{Two-dimensional illustration of the Swapping Algorithm for $S_1(C,C)$ where each marginal is discretized with 1,000 points of discretization. In the right panel, a red circle with radius 0.5 and center (0.5,0.5) is also displayed. \label{circleU01}}
\end{figure}

Repeating the above experiment with $\beta=0.2$, $\beta=0.5$, $\beta=1.5$ and $\beta=1.8$, we obtain four other copulas, showing that the maximizing copula in \eqref{MAXsco} depends on $\beta$.  A red line denotes the boundary of the support of the symmetric $L_{p}$-norm spherical copula for $p=3-\beta$.  See Figure \ref{circleU01beta}.

\begin{figure}[!htbp]
\begin{center}
\begin{tabular}{cc}
\includegraphics[width=0.5\textwidth,height=0.4\textwidth]{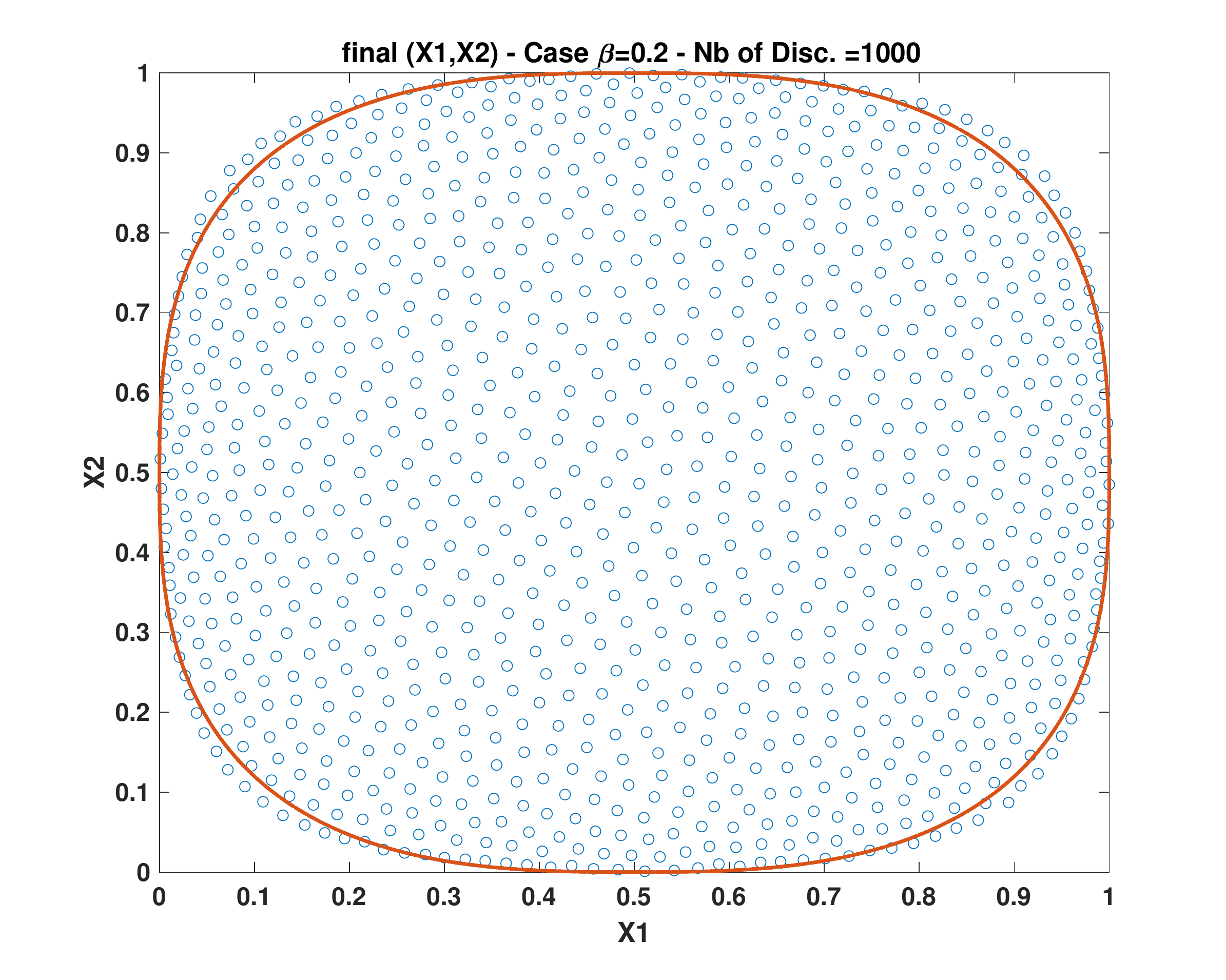}&\includegraphics[width=0.5\textwidth,height=0.4\textwidth]{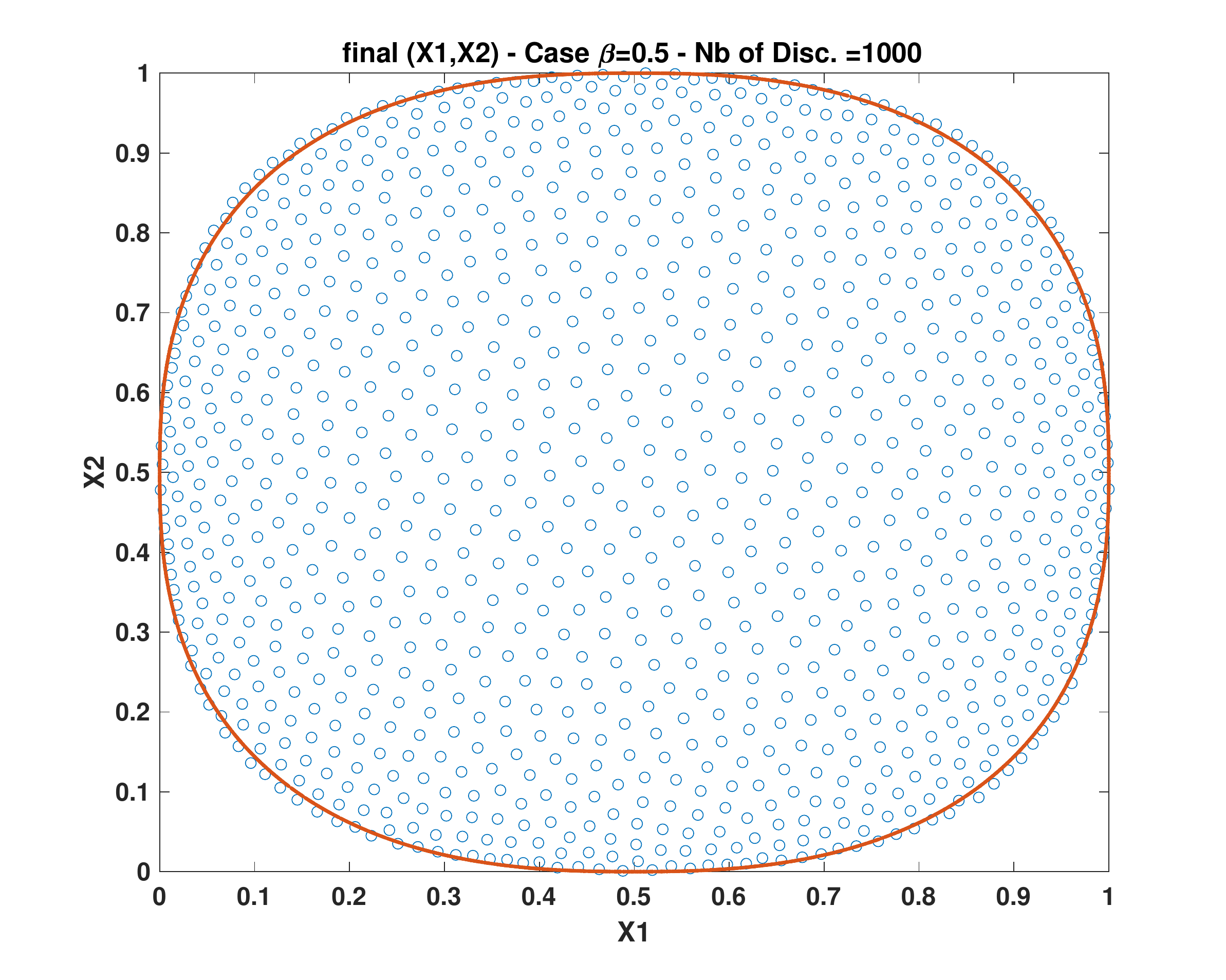}
\\
$\beta=0.2$  &$\beta=0.5$ \\
\includegraphics[width=0.5\textwidth,height=0.4\textwidth]{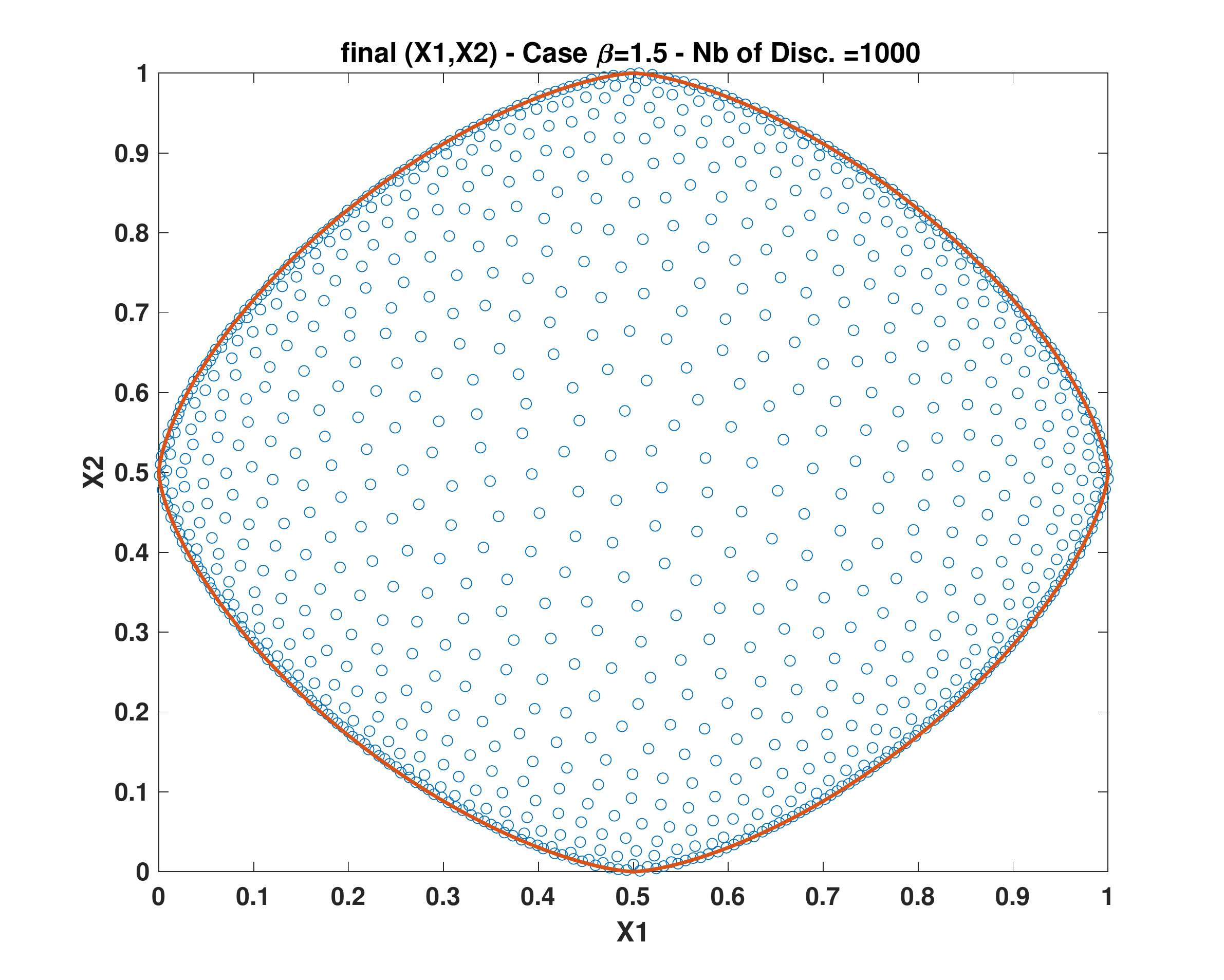}&\includegraphics[width=0.5\textwidth,height=0.4\textwidth]{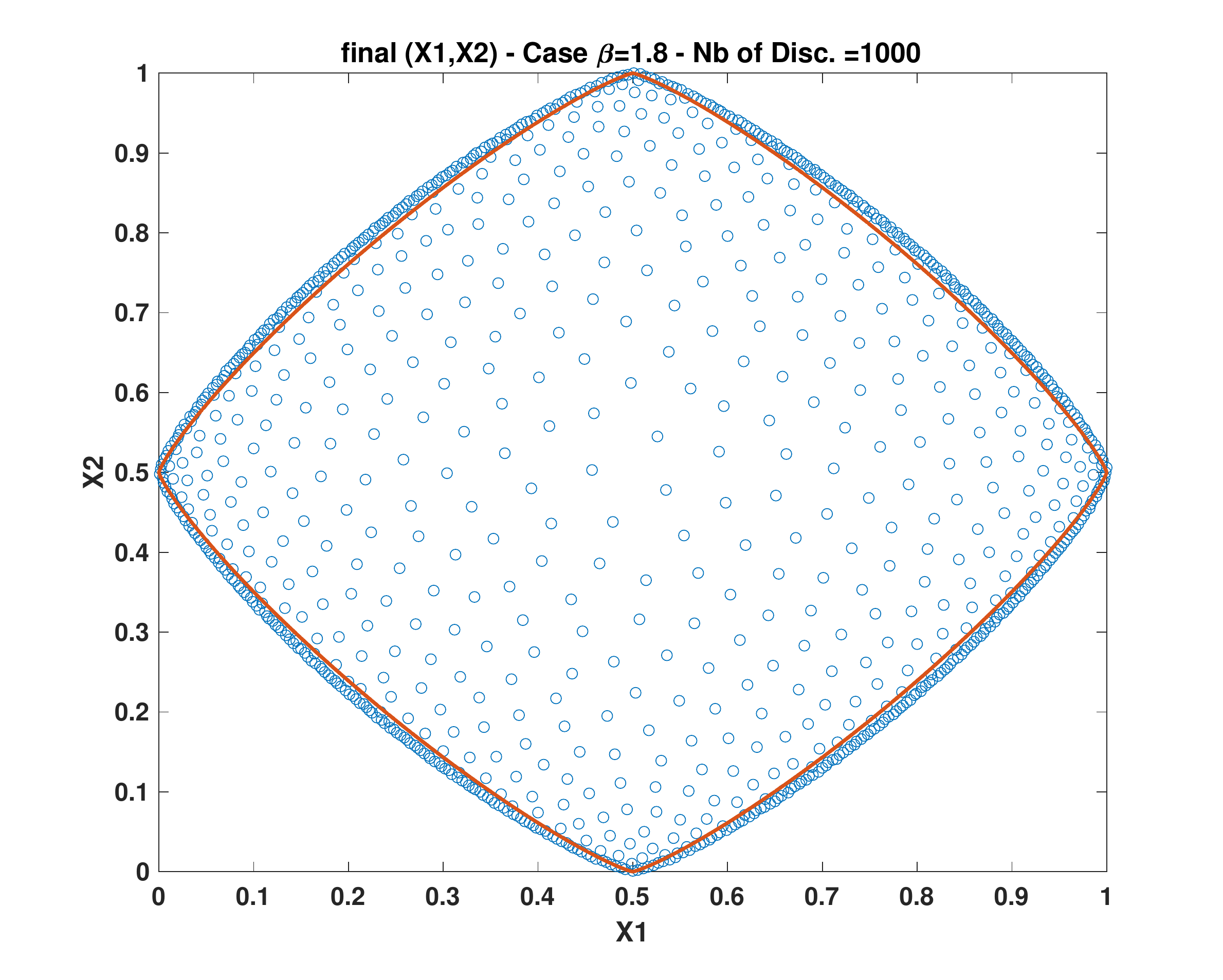}\\
$\beta=1.5$ &$\beta=1.8$\\
\end{tabular}

\end{center}
\caption{Two-dimensional illustration of the Swapping Algorithm for $S_\beta(C,C)$ with 1,000 points of discretization. A red bowl with radius 0.5 and center at $(0.5,0.5)$ for the $L_{p}$-norm with $p=3-\beta$ is also displayed.\label{circleU01beta}}
\end{figure}

\newpage

\section{Multivariate $L_p$-norm spherical copulas}\label{sec:multivariate}

We will now give a general characterization of $L_p$-norm spherical copulas in arbitrary dimension $n \ge 2$ and any $1 \le p < \infty$. We have already seen in Theorem \ref{th:necessary} that it is necessary that $p \ge n-1$. Similarly as in the bivariate case discussed in Section \ref{sec:bivariate}, we will now see that this is also sufficient to obtain a positive $L_p$-norm spherical copula with a radius following a transformed Beta distribution.

% We will consider the case of random vectors $\bsX^+$ with a positive $L_p$ norm spherical copula.

\begin{theorem} \label{thm:lps-multi}
For any $n \ge 2$ and $p \ge n-1$ there exists a unique positive $L_p$-norm spherical copula of a random vector $\bsX^+ = R \cdot \bsU_n^+$. If $p > n-1$ then the radius $R$ has the property that $R^p \sim Beta(\frac{n}{p}, 1- \frac{n-1}{p})$ and hence $R$ has a density of the form
$$
f_R(r) = \frac{\Gamma(1/p)}{\Gamma(n/p) \Gamma(1 - \frac{n-1}p)}  \cdot r^{n-1} \cdot (1-r^p)^{-\frac{n-1}{p}} \one_{[0<r<1]}.
$$
If $p = n-1$ then the unique positive $L_p$-norm spherical copula has the property $R = 1$ a.s. and thus $\bsX^+ = \bsU_n^+$.
\end{theorem}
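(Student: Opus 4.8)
The plan is to exploit the multiplicative structure $X_i = R \cdot U_i$ coming from $\bsX^+ = R \cdot \bsU_n^+$ and to reduce the whole statement to a computation with Beta distributions via Lemma \ref{lemma-beta}. Raising the marginal identity to the $p$-th power gives $X_i^p = R^p \cdot U_i^p$, with $R$ and $U_i$ independent. By Theorem \ref{lpu-marginal} we have $U_i^p \sim Beta(1/p,(n-1)/p)$, and the copula requirement $X_i \sim U(0,1)$ is equivalent, by part a) of Lemma \ref{lemma-beta} applied in both directions, to $X_i^p \sim Beta(1/p,1)$. Thus the problem becomes: find a distribution for the independent factor $R^p$ on $(0,1)$ such that its product with a $Beta(1/p,(n-1)/p)$ variable is a $Beta(1/p,1)$ variable.

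I would read off the answer from part b) of Lemma \ref{lemma-beta}. Writing $U_i^p \sim Beta(\alpha,\beta)$ with $\alpha = 1/p$ and $\beta = (n-1)/p$, the lemma says that multiplying by an independent $Beta(\alpha+\beta,\gamma) = Beta(n/p,\gamma)$ factor produces a $Beta(1/p,\,(n-1)/p+\gamma)$ variable. Matching the second parameter to $1$ forces $\gamma = 1-(n-1)/p$, so I would \emph{define} $R$ by $R^p \sim Beta(n/p,\,1-(n-1)/p)$, taken independent of $\bsU_n^+$, and set $\bsX^+ = R \cdot \bsU_n^+$. By construction this is $L_p$-norm spherical on $\R_+^n$, and part b) of Lemma \ref{lemma-beta} certifies that $X_i \sim U(0,1)$; since $\bsU_n^+$ is exchangeable it suffices to check a single marginal. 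This establishes existence, while uniqueness is already supplied by Theorem \ref{th:unique}. The density of $R$ then follows from the explicit $Beta(n/p,1-(n-1)/p)$ density of $R^p$ by the routine change of variables $r \mapsto r^p$, simplifying the normalising constant with $\Gamma(1+1/p) = (1/p)\,\Gamma(1/p)$ to reach the stated form.

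The one genuine constraint, and the main thing to watch, is that $Beta(n/p,\,1-(n-1)/p)$ is well defined only when its second parameter is strictly positive, i.e.\ when $p > n-1$; this is precisely the regime in which the construction yields a non-degenerate radius. At the boundary $p = n-1$ the parameter $1-(n-1)/p$ vanishes, so I would treat this case separately: the formal $Beta(n/p,0)$ law degenerates to a point mass at $1$. I would confirm this directly from the necessary condition in Theorem \ref{th:necessary}, whose identity $E(R^p) = n/(p+1)$ gives $E(R^p) = 1$ at $p = n-1$, and since $R \le 1$ this forces $R = 1$ a.s., hence $\bsX^+ = \bsU_n^+$. Beyond this bookkeeping I do not anticipate any real difficulty; the only subtle point is recognising that part b) of Lemma \ref{lemma-beta} is exactly the tool that simultaneously reveals the correct radial law and verifies the uniform marginals.
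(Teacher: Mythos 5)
Your proposal is correct and follows essentially the same route as the paper: you use Lemma \ref{lemma-beta} b) to identify $R^p \sim Beta(\frac np, 1-\frac{n-1}{p})$ as the unique radial law making the marginals $U(0,1)$, invoke Theorem \ref{th:unique} for uniqueness, and derive the density by the change of variables $r \mapsto r^p$ with the identity $\Gamma(1+\frac 1p) = \frac 1p \Gamma(\frac 1p)$. Your treatment of the boundary case $p=n-1$ via the moment identity $E(R^p)=n/(p+1)$ from Theorem \ref{th:necessary} is a minor variation on the paper's direct observation that $U_i^p \sim Beta(\frac 1p,1)$ forces $U_i \sim U(0,1)$ there, but it is the same argument in substance.
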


\begin{proof}
Let $\bsU_n^+ = (U_1,\ldots,U_n)$ be a random vector uniformly distributed on the set $\{\bsx \in \R^n: \|\bsx\|_p = 1, \bsx \ge \bsnull \}$. 
According to Theorem \ref{lpu-marginal} we have $U_i^p \sim Beta\left(\frac{1}{p},\frac{n-1}{p}\right)$. Assume that $R^p \sim Beta(\frac{n}{p}, 1- \frac{n-1}{p})$ and is independent of $U_i$. It follows from Lemma \ref{lemma-beta} b) 
that $R^p \cdot U_i^p \sim Beta\left(\frac{1}{p},1\right)$ and thus from Lemma \ref{lemma-beta} a) that $R \cdot U_i = (R^p \cdot U_i^p)^{1/p} \sim Beta(1,1) = U(0,1)$. Thus 
$\bsX^+ = R \cdot \bsU_n^+$ has $U(0,1)$ marginals and therefore its distribution is a copula. The uniqueness follows from Theorem \ref{th:unique}. 

To derive the density of $R$, notice that if $Y = R^p \sim Beta(\frac{n}{p}, 1- \frac{n-1}{p})$ then $P(R \le x) = P(Y \le x^p)$ and therefore
$$
f_R(r) = f_Y(r^p) \cdot p \cdot r^{p-1}.
$$
Inserting for $f_Y$ the density of the Beta distribution yields
$$
f_R(r) = \frac{p \Gamma(1+\frac 1 p)}{\Gamma(n/p) \Gamma(1 - \frac{n-1}p)} \cdot (r^{p})^{\frac{n}{p}-1} (1-r^p)^{- \frac{n-1}{p}} \cdot r^{p-1} = \frac{\Gamma(1/p)}{\Gamma(n/p) \Gamma(1 - \frac{n-1}p)}  \cdot r^{n-1} \cdot (1-r^p)^{-\frac{n-1}{p}}
$$
for $0 < r < 1$ and of course $f_R(r) = 0$ for $r \not\in (0,1)$.
If $p = n-1$ then the derivation simplifies to $U_i^p \sim Beta\left(\frac{1}{p},1\right)$ and thus $U_i \sim U(0,1)$, so that $R = 1$ a.s. yields uniform marginals for $R \cdot U_i$.
\end{proof}
\medskip

From results by \citet{song-gupta-97} and a density transform, we can compute the density of the positive $L_p$-norm spherical copula.

\begin{corollary}
	Let $n \geq 2$ and  $p\geq n-1$. Then, the probability density of the unique positive
	$L_p$-norm spherical copula is given by
	\begin{equation} \label{eq:copula-density}
	 c^+(\bsx) = c^+(\bsx; p) = \frac{1}{\Gamma^{n-1}(1 + \frac 1 p) \Gamma(1 - \frac{n-1}{p})} (1-\|\bsx\|_p^p)^{-(n-1)/p}, \quad \bsx \geq \bsnull, \ \|\bsx\|_p < 1.
	\end{equation}
\end{corollary}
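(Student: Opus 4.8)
The plan is to reduce the statement to identifying the density generator $g$ and then pin down $g$ from the already-known law of the radius. By Definition~\ref{plp-copula} and \eqref{lpc-def}, the density of a positive $L_p$-norm spherical copula must have the form $c^+(\bsx) = 2^n g(\|\bsx\|_p)$ on $\{\bsx \ge \bsnull : \|\bsx\|_p < 1\}$, so it suffices to determine $g$ together with its normalising constant. I would treat the genuinely absolutely continuous case $p > n-1$; the boundary value $p = n-1$ is the degenerate case of Theorem~\ref{thm:lps-multi} in which $\bsX^+ = \bsU_n^+$ is supported on the sphere $\{\|\bsx\|_p = 1\}$, which is consistent with the claimed formula collapsing through $\Gamma(1 - \tfrac{n-1}{p}) = \Gamma(0) = \infty$.

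The key device is the relation between the radial density $f_R$ and the generator $g$ coming from the polar decomposition with respect to the $L_p$-norm (the $n$-dimensional analogue of the bivariate radial formula used above, cf.\ \citealp{song-gupta-97}). Writing $\bsx = r \bsu$ with $r = \|\bsx\|_p$ and $\bsu$ on the positive unit $L_p$-sphere, the volume element factors as $r^{n-1}\dd r\, \dd\sigma(\bsu)$, and since $\bsU_n^+$ is uniform on that sphere one obtains
\begin{equation*}
f_R(r) = \kappa_{n,p}\, 2^n\, r^{n-1}\, g(r), \qquad 0 < r < 1,
\end{equation*}
where $\kappa_{n,p}$ is the total surface content of the positive unit $L_p$-sphere. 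I would compute $\kappa_{n,p}$ from the volume $\Gamma^{n}(1+\tfrac1p)/\Gamma(1+\tfrac{n}{p})$ of the positive unit $L_p$-ball via the identity $\mathrm{vol} = \kappa_{n,p}/n$, which gives $\kappa_{n,p} = p\,\Gamma^{n}(1+\tfrac1p)/\Gamma(\tfrac{n}{p})$.

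It then remains to insert the law of $R$ from Theorem~\ref{thm:lps-multi} and invert. Substituting $f_R(r) = \frac{\Gamma(1/p)}{\Gamma(n/p)\Gamma(1-(n-1)/p)}\, r^{n-1}(1-r^p)^{-(n-1)/p}$ and solving for $2^n g(r)$ yields
\begin{equation*}
2^n g(r) = \frac{\Gamma(1/p)}{p\,\Gamma^{n}(1+\tfrac1p)\,\Gamma(1-\tfrac{n-1}{p})}\,(1-r^p)^{-(n-1)/p},
\end{equation*}
and the Gamma prefactor collapses to $1/\Gamma^{n-1}(1+\tfrac1p)$ upon using $\Gamma(1/p) = p\,\Gamma(1+\tfrac1p)$. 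Evaluating at $r = \|\bsx\|_p$ then reproduces exactly \eqref{eq:copula-density}.

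The only real obstacle is getting the Jacobian constant $\kappa_{n,p}$ right, since the shape $(1-r^p)^{-(n-1)/p}$ already drops out for free as $f_R(r)/r^{n-1}$. To side-step the surface computation entirely, I would alternatively read off the shape $g(r) \propto (1-r^p)^{-(n-1)/p}$ and fix the constant by the normalisation $\int_{\bsx \ge \bsnull,\ \|\bsx\|_p<1} c^+(\bsx)\dd\bsx = 1$; after the substitution $t_i = x_i^p$ this becomes a Dirichlet integral with parameters $(\tfrac1p,\ldots,\tfrac1p,\,1-\tfrac{n-1}{p})$, whose value $\Gamma^{n-1}(1+\tfrac1p)\,\Gamma(1-\tfrac{n-1}{p})$ recovers the same normaliser and confirms \eqref{eq:copula-density}.
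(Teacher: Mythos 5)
Your proof is correct, but it takes a genuinely different route from the paper. The paper obtains \eqref{eq:copula-density} by a full multivariate change of variables: it writes $\bsX^+$ as the image of $(R,|U_1|,\ldots,|U_{n-1}|)$ under the inverse of $H(\bsx)=(\|\bsx\|_p, x_1/\|\bsx\|_p,\ldots,x_{n-1}/\|\bsx\|_p)$ and multiplies three ingredients: $f_R$ from Theorem \ref{thm:lps-multi}, the joint angular density of $(|U_1|,\ldots,|U_{n-1}|)$ from Theorem 1.1 of \citet{song-gupta-97}, and the Jacobian determinant from their Lemma 1.1. You instead exploit the structural fact that, by \eqref{lpc-def}, the density must be of generator form $2^n g(\|\bsx\|_p)$, which reduces the problem to a one-dimensional inversion of the radial relation $f_R(r) = n V_{n,p}\, r^{n-1}\cdot 2^n g(r)$ with $V_{n,p} = \Gamma^n(1+\frac1p)/\Gamma(1+\frac{n}{p})$ the volume of the positive unit $L_p$-ball; this avoids the $(n-1)$-dimensional angular density and the Jacobian computation entirely, at the cost of needing the constant $nV_{n,p}$ — and your fallback of fixing the constant by the Dirichlet-integral normalization (which indeed evaluates, after $t_i = x_i^p$, to $p^{-n}\Gamma^n(\frac1p)\Gamma(1-\frac{n-1}{p})/\Gamma(1+\frac1p) = \Gamma^{n-1}(1+\frac1p)\Gamma(1-\frac{n-1}{p})$) removes even that dependence. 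I verified the arithmetic: your $\kappa_{n,p} = p\,\Gamma^n(1+\frac1p)/\Gamma(\frac{n}{p})$ equals $nV_{n,p}$, and the prefactor collapses via $\Gamma(\frac1p) = p\,\Gamma(1+\frac1p)$ exactly as claimed. Two remarks: first, your phrase ``total surface content'' should be understood as the cone-measure mass $nV_{n,p}$ (which is how you in effect define it via $\mathrm{vol} = \kappa_{n,p}/n$); for $p \notin \{1,2,\infty\}$ the $L_p$-uniform distribution of \citet{song-gupta-97} is \emph{not} uniform with respect to Hausdorff surface measure, and the identity $\mathrm{vol} = \text{surface}/n$ fails for genuine surface area, so the cone-measure reading is the one that makes your factorization $r^{n-1}\dd r\,\dd\sigma(\bsu)$ valid — as stated it is self-consistent and correct. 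Second, your explicit handling of the boundary case $p = n-1$ (where the copula is singular and \eqref{eq:copula-density} degenerates through $\Gamma(0)$) is actually more careful than the paper, whose corollary nominally allows $p = n-1$. Both proofs take $f_R$ from Theorem \ref{thm:lps-multi} as their essential input, so yours is a leaner derivation of the same result rather than an independent one.
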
	
\begin{proof}
	First, we note that $\bsX^+$ can be obtained from the vector $(R,|U_1|,\ldots,|U_{n-1}|)$ by the inverse of the transformation $H(\bsx) =(\|\bsx\|_p, x_1/\|\bsx\|_p,\ldots,x_{n-1}/\|\bsx\|_p)$.
	Using the probability density $f_R$ calculated in Theorem \ref{thm:lps-multi}, the joint density
	$$ f_{|U_1|,\ldots,|U_{n-1}|}(u_1,\ldots,u_{n-1}) = \frac{p^{n-1} \Gamma(n/p)}{\Gamma^n(1/p)} \left( 1 - \sum_{i=1}^{n-1} |u_i|^p\right)^{(1-p)/p}$$
	obtained in Theorem 1.1 in \citet{song-gupta-97} and the determinant of the Jacobian
	$J_H,$ which is equal to
	$$ \det(J_H(\bm x)) = \|x\|_p^{1-n} \left(1 - \sum_{i=1}^{n-1} \frac{x_i^p}{\|\bsx\|_p^p}\right)^{1-1/p}$$
	according to Lemma 1.1 in \citet{song-gupta-97}, the result directly follows from
	the density transformation formula
	$$ c^+(\bsx) = f_{R}(\|\bsx\|_p) \cdot
	               f_{|U_1|,\ldots,|U_{n-1}|}\left(\frac{x_1}{\|\bsx\|_p^p}, \ldots, \frac{x_{n-1}}{\|\bsx\|_p^p}\right) \cdot
	               |\det(J_H(\bsx))|.$$
	               
\end{proof}

\cite{gupta-song-97} also show in their Theorem 4.1 that subsets of $L_p$-norm spherical distributions are still $L_p$-norm spherical distributions. Therefore, we can consider the opposite question of extendibility. This topic has seen increasing interest recently, see \cite{takis2019} and \cite{mai2020} for some recent expositions on the general question of finite and infinite extendibility of exchangeable random vectors. From the previous discussion we get here immediately the following result showing that we only have finite extendibility here, depending on the parameter $p$. 

\begin{theorem}
A random vector $\bsX^+ = (X_1,\ldots,X_d)$ with uniform marginals and an $L_p$-norm spherical copula can be extended to a random vector $(X_1,\ldots,X_n)$ with $n > d$ having an $L_p$-norm spherical copula if and only if $n \le p+1$. 
\end{theorem}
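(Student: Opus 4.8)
The plan is to reduce both implications to the existence, uniqueness, and marginal-closure results already available, since the condition $n \le p+1$ is merely a rearrangement of $p \ge n-1$, which is exactly the threshold for existence of an $n$-dimensional $L_p$-norm spherical copula (Theorems \ref{th:necessary} and \ref{thm:lps-multi}). So the statement really asserts that the extension is possible precisely when an $n$-dimensional $L_p$-norm spherical copula exists at all.

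The ``only if'' direction is immediate. If $(X_1,\ldots,X_d)$ can be extended to a vector $(X_1,\ldots,X_n)$ carrying an $L_p$-norm spherical copula, then by definition $(X_1,\ldots,X_n)$ is itself a positive $L_p$-norm spherical copula in dimension $n$, and Theorem \ref{th:necessary} forces $p \ge n-1$, that is $n \le p+1$.

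For the ``if'' direction, I would assume $n \le p+1$, equivalently $p \ge n-1$, and first invoke Theorem \ref{thm:lps-multi} to produce the unique positive $L_p$-norm spherical copula $\bsX^+ = (X_1,\ldots,X_n) = R \cdot \bsU_n^+$ in dimension $n$ (including the boundary case $p=n-1$, where $R=1$ a.s.). It then remains to check that its $d$-dimensional margin $(X_1,\ldots,X_d)$ coincides with the prescribed copula. Here the key ingredient is marginal closure: passing from $\bsX^+$ to the associated symmetric vector $\bsX$ on $\R^n$ via the back-transform \eqref{plus-backtransform}, Theorem 4.1 of \citet{gupta-song-97} guarantees that the margin $(X_1,\ldots,X_d)$ of $\bsX$ is again $L_p$-norm spherical on $\R^d$, and taking absolute values returns a positive $L_p$-norm spherical distribution on $\R_+^d$. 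Since each coordinate $X_i$ is $U(0,1)$, being a coordinate of a copula, this margin has uniform marginals and is therefore a positive $L_p$-norm spherical copula in dimension $d$. By uniqueness (Theorem \ref{th:unique}) it must equal the prescribed copula of $(X_1,\ldots,X_d)$, so $(X_1,\ldots,X_n)$ is the desired extension.

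I expect the only step requiring genuine care to be the transfer of the marginal-closure statement of \citet{gupta-song-97}, which is phrased for $L_p$-norm spherical distributions on $\R^n$, to the ``positive'' copula setting on $\R_+^n$. This is handled cleanly by the symmetrization \eqref{plus-backtransform} together with the fact that marginalization and taking absolute values commute, so that the $d$-margin of $\bsX^+$ is exactly the positive version of the $d$-margin of $\bsX$; but it is the one point where the distinction between the general and the positive variants of the definition must be tracked carefully. Beyond this bookkeeping there is no deeper obstacle, as existence, closure under margins, and uniqueness do all the work.
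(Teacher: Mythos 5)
Your proof is correct and takes essentially the same route as the paper: the paper offers no written proof, stating only that the result follows ``immediately from the previous discussion,'' by which it means exactly the ingredients you assemble --- the necessity bound of Theorem \ref{th:necessary}, existence and uniqueness from Theorems \ref{thm:lps-multi} and \ref{th:unique}, and marginal closure via Theorem 4.1 of \citet{gupta-song-97}. Your explicit handling of the transfer between the symmetric setting on $\R^n$ and the positive setting on $\R_+^n$ through \eqref{plus-backtransform} is a careful elaboration of a point the paper leaves implicit, and it is handled correctly.
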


\paragraph{Simulation.}
We can easily generate samples from the positive $L_p$-norm spherical copula 
using the stochastic representation $\bsX^+ = R \cdot \bsU_n^+$. 
To this end, we start from an arbitrary $L_p$-spherical symmetric random vector $\bsZ$, e.g.~the random vector $\bsZ$ with i.i.d.~components which have a $p$-generalized normal distribution with density
$$
f(x) = \frac{p}{2^{1+1/p} {\Gamma(1/p)}} \cdot \exp(-{|x|_p^p/2}), \quad x \in \R, 
$$ 
see Example 2.2 in \citet{gupta-song-97} with $r = 1/2$.
From this we can sample realizations of $\bsU_n^+$ by using the formula 
$$ \bsU_n^+ := \frac{1}{\|\bsZ\|_p}(|Z_1|,\ldots,|Z_n|)^\top.$$
Then we have to multiply this vector with the independent random radius $R$ given in Theorem \ref{thm:lps-multi}, i.e.\ the $p$-th root of a Beta distributed vector, to get the random vector $\bsX^+$ with uniform marginals.
Examples of such samples can be found in Figure \ref{fig:samples}.

\begin{figure}
	\includegraphics[width=15cm]{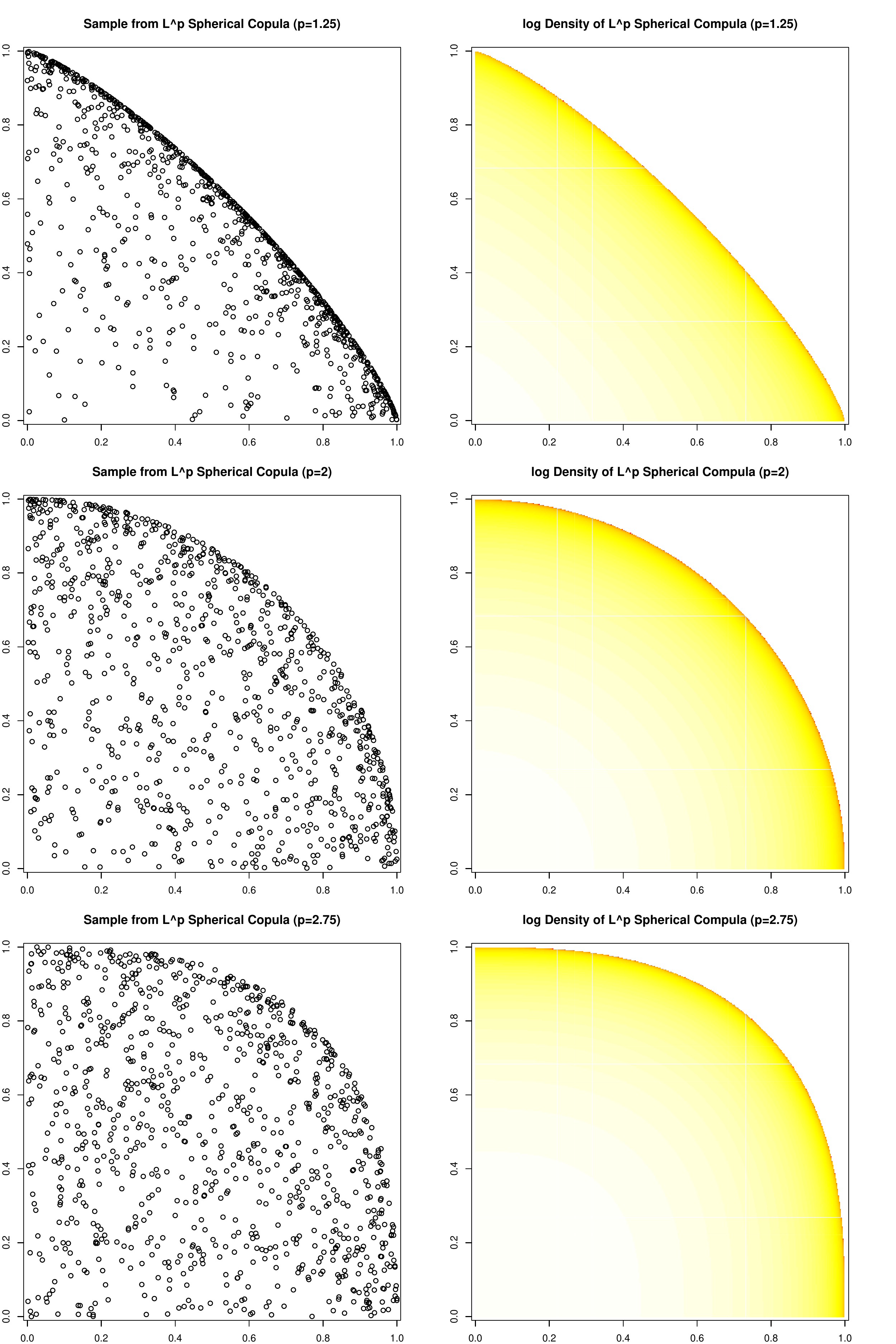}	
  \caption{Left: Samples of size 1,000 from the positive  $L_p$-norm spherical copulas with $p=1.25$, $2$ and $2.75$ (top to bottom), respectively. Right: The corresponding probability density functions.}  \label{fig:samples}	
\end{figure}

The simulation of the Beta distributed radius can be avoided if $p$ is an integer.
In this case, the unique $(p+1)$-dimensional positive $L_p$-norm spherical distribution is the $L_p$-uniform distribution. As subvectors of $L_p$-norm spherical random vectors are again $L_p$-norm spherically distributed \citep[cf.~Theorem 4.1 in][]{gupta-song-97}, samples from an $L_p$-norm spherical distribution
in lower dimension can just be obtained by projection.

\paragraph{Inference.} For statistical inference, we consider $m$ independent observations $\bsx^{(1)}, \ldots, \bsx^{(m)}$ from an $n$-dimensional positive $L_p$-norm spherical copula with unknown parameter $p > n-1$. In view of the explicit expression for the probability density given in \eqref{eq:copula-density}, the use of maximum likelihood inference is appealing. In case that all the observations are inside the positive part of the $L_{n-1}$ unit ball $\{\bsx \in [0,1]^n: \, \|\bsx\|_{n-1} < 1, \, \bsx \geq \bsnull\}$, the likelihood remains bounded and can therefore be maximized within the interval $(n-1,\infty)$.

Of particular interest is the case when there is at least one observation in $\{ \bsx \in [0,1]^n: \, \|\bsx\|_{n-1} > 1, \, \bsx \geq \bsnull \}$, as this observation restricts the set of admissible values for $p$ such that $\prod_{i=1}^m c^+(\bsx^{(i)};p) > 0$. More precisely, due to the fact
that the function $p \mapsto \|\bsx\|_p$ is continuous and monotonically decreasing, there is
\begin{align*}
 p^* ={}& \inf\{p > n-1: \ \|\bsx^{(i)}\|_p < 1 \text{ for all } i=1,\ldots,m \} \\
     ={}& \inf\{p > n-1: \ c^+(\bsx^{(i)};p) > 0 \text{ for all } i=1,\ldots,m \} > n-1
\end{align*}       
and we have that $\max_{i=1,\ldots,m} \|\bsx^{(i)}\|_{p^*} = 1$. While, for $\bsx^{(i)}$ with $\|\bsx^{(i)}\|_{p^*} < 1$, the function $p \mapsto c^+(\bsx^{(i)};p)$ is both bounded away from zero and bounded from above uniformly on the interval $(p^*,\infty)$, any $\bsx^{(i)}$ with $\|\bsx^{(i)}\|_{p^*} = 1$ satisfies that $\lim_{p \downarrow p^*} c^+(\bsx^{(i)};p) = \infty$.
Even though the maximum likelihood estimator is not well-defined in this case, $p^*$ can be seen as a natural estimator. Note that this is similar to the nonregular cases considered in \citet{smith-85} where the location parameter is estimated such that the support of the distribution is the minimal set containing all the observations.

\section{$L_\infty$-norm spherical distributions and copulas\label{S4}}

The case of spherical distributions with respect to $L_\infty$-norm has not been considered in \cite{gupta-song-97}. However there is a discussion of infinite exchangeable sequences of $L_\infty$-norm spherical distributions in \cite{gnedin1995}. For convenience we consider here again only the case
of positive $L_\infty$ spherical distributions on $\R_+^n$. It is obvious that $L_\infty$ spherical distributions with a density of the form $f(\bsx) = g(\|\bsx\|_\infty)$ exist, as the case of 
a random vector $\bsU_n^\bot = (U_1^\bot,\ldots,U_n^\bot)$ with i.i.d.~$U_1^\bot,\ldots,U_n^\bot \sim U(0,1)$ is an example with 
$$
f_{\bsU_n^\bot}(\bsx) = g(\|\bsx\|_\infty) = \one_{[\|\bsx\|_\infty < 1, \, \bsx \geq \bsnull]}.
$$
We write $\bsU_n^\bot \sim \mathcal{U}_n(0,1)$ for its distribution.
This indeed is of course a copula and thus we have already derived a positive $L_\infty$ spherical copula. Notice that in this case the corresponding circular $L_\infty$ spherical copula
of the corresponding vector $\bsX^\circ$ is exactly the same.

If we define $\bsU_n^\infty := \bsU_n^\bot / \| \bsU_n^\bot \|_\infty$ and $R :=  \|\bsU_n^\bot\|_\infty$ then we get here also that $R$ and $\bsU_n^\infty$ are independent and it is natural to call
the distribution of $\bsU_n^\infty$ the uniform distribution on the positive part of the $L_\infty$ unit sphere $\{\bsx \in \R^n: \, \| \bsx\|_\infty = 1, \, \bsx \ge \bsnull\}$. The reason why this case has not been considered
in \citet{gupta-song-97} and \citet{song-gupta-97} is probably based on the fact that their definition of an $L_p$ uniform distribution is based on the fact that its lower dimensional marginals have a density
what is not the case here. For the $L_\infty$ uniform distribution we get for the distribution of a subvector a mixture of a uniform distribution inside the ball and uniform distributions on the hypersurfaces of the ball.

Let us denote by $\mathcal{U}_{i:n}(0,1)$ the distribution of the random vector 
$$
\bsU_{i:n} := (U_1, \ldots, U_{i-1}, 1, U_{i+1}, \ldots, U_n)
$$ with i.i.d.~$U_1,\ldots,U_n \sim U(0,1)$, which is a uniform distribution on a hypersurface of the positive part of an $L_\infty$ ball $\{\bsx \in \R^n: \| \bsx\|_\infty = 1, \bsx \ge \bsnull\}$.
Then we can state the following result for subvectors of an $L_\infty$ uniform distribution.

\begin{theorem}
Assume that $\bsU_n^\infty = (U_1^\infty,\ldots,U_n^\infty)$ has an $L_\infty$ uniform distribution. Then for $k < n$ the subvector $(U_1^\infty,\ldots,U_k^\infty)$ has the distribution
$$
(U_1^\infty,\ldots,U_k^\infty) \sim \frac{1}{n} \sum_{i=1}^k \mathcal{U}_{i:k}(0,1) +  \frac{n-k}{n}  \mathcal{U}_k(0,1).
$$
In particular, for the univariate marginals we get
$$
U_i^\infty  \sim \frac{1}{n} \delta_1 +  \frac{n-1}{n}  U(0,1) \quad \mbox{ for all } i = 1,\ldots, n.
$$
\end{theorem}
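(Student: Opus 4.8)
The plan is to first determine the law of the full normalized vector $\bsU_n^\infty = \bsU_n^\bot / \|\bsU_n^\bot\|_\infty$ and then read off the subvector by projection. Since $\bsU_n^\bot$ has i.i.d.\ continuous components, almost surely exactly one coordinate attains the maximum $R = \|\bsU_n^\bot\|_\infty = \max_j U_j^\bot$, and by symmetry of the i.i.d.\ coordinates each index $j \in \{1,\ldots,n\}$ is the argmax with probability $1/n$. I would therefore condition on the event $A_j = \{U_j^\bot = R\}$ and on the value $R = m$, and establish a clean mixture representation for $\bsU_n^\infty$.

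The key step is the conditional computation. Given $A_j$ and $R = m$, the remaining coordinates $(U_i^\bot)_{i \neq j}$ are i.i.d.\ $U(0,m)$: conditional on $U_j^\bot = m$ the other coordinates are independent $U(0,1)$, and restricting to $A_j = \{U_i^\bot < m \text{ for all } i \neq j\}$ turns them into independent $U(0,m)$ variables. Dividing by $R = m$, the normalized coordinates $U_i^\infty = U_i^\bot / m$ for $i \neq j$ become i.i.d.\ $U(0,1)$, while $U_j^\infty = 1$. Crucially, this conditional law does not depend on $m$, so the conditioning on $R$ may be dropped, and conditional on $A_j$ the vector $\bsU_n^\infty$ has exactly the distribution $\mathcal{U}_{j:n}(0,1)$. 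Mixing over $j$ with weights $1/n$ yields
$$
\bsU_n^\infty \sim \frac{1}{n} \sum_{j=1}^n \mathcal{U}_{j:n}(0,1).
$$

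It then remains to project onto the first $k$ coordinates. For the mixture component $\mathcal{U}_{j:n}(0,1)$, the first $k$ entries are $(V_1,\ldots,V_{j-1},1,V_{j+1},\ldots,V_k)$ when $j \le k$, which is exactly $\mathcal{U}_{j:k}(0,1)$, and are $(V_1,\ldots,V_k)$ with all entries i.i.d.\ $U(0,1)$ when $j > k$, which is $\mathcal{U}_k(0,1)$. Summing the $k$ components with $j \le k$ and the $n-k$ identical components with $j > k$, each weighted by $1/n$, gives the claimed formula. The univariate marginal is the special case $k=1$: the component with $j=1$ contributes the point mass $\mathcal{U}_{1:1}(0,1) = \delta_1$ with weight $1/n$, and the remaining $n-1$ components contribute $U(0,1)$ with total weight $(n-1)/n$.

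The only delicate point is the conditional computation, where one must correctly handle the almost sure uniqueness of the argmax (ties occur with probability zero) and verify that the normalization removes all dependence on the value $m$ of the maximum; everything else is routine bookkeeping of the mixture weights.
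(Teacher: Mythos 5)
Your proposal is correct and follows essentially the same route as the paper: condition on which coordinate attains the almost surely unique maximum (each with probability $1/n$ by exchangeability), observe that the normalized remaining coordinates are then i.i.d.\ $U(0,1)$, and split the projection onto the first $k$ coordinates into the cases $j \le k$ and $j > k$. The only difference is that you spell out the intermediate conditioning on $R = m$ (showing the others are i.i.d.\ $U(0,m)$ and that scaling removes the dependence on $m$), a step the paper asserts without detail, so your write-up is a slightly more explicit version of the same argument.
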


\begin{proof}
Let $\bsU_n^\bot \sim \mathcal{U}_n(0,1)$ and $\bsU_n^\infty := \bsU_n^\bot / \| \bsU_n^\bot \|_\infty = (U_1^\infty,\ldots,U_n^\infty)$. For $\bsU_n^\bot = (U_1^\bot,\ldots,U_n^\bot)$ with i.i.d.~$U_1^\bot,\ldots,U_n^\bot \sim U(0,1)$ we have a.s.~a unique maximum 
$$ \|\bsU_n^\bot\|_\infty = \max\{U_1^\bot,\ldots,U_n^\bot\} $$
and therefore due to symmetry $P(U_i^\infty = 1) = P(\max\{U_1^\bot,\ldots,U_n^\bot\} = U_i^\bot) =   1/n$ for $i = 1,\ldots,n$ and these events are disjoint. Given $U_i^\infty = 1$, i.e. $\max\{U_1^\bot,\ldots,U_n^\bot\} = U_i^\bot$, the conditional distribution of the other $n-1$ components of $\bsU_n^\infty$ is a uniform distribution $\mathcal{U}_{n-1}(0,1)$. Therefore, the conditional distribution of $(U_1^\infty,\ldots,U_k^\infty)$ given 
$U_i^\infty=1$
for some $i = 1,\ldots, k$ is given by $\mathcal{U}_{i:k}(0,1)$. With probability $ \frac{n-k}{n}$ we have $U_i^\infty < 1$
for all $i = 1,\ldots, k$ and then the conditional distribution under this event is a uniform distribution. 
\end{proof}

From this result, we can easily see that the $L_\infty$-norm spherical distribution can be perceived as weak limit of $L_p$-norm spherical distributions as $p \to \infty$.
To this end, let $R^{(p)}$ and $(U_1^{(p)},\ldots,U_n^{(p)})$ denote the radial and the angular components of a positive  $L_p$-norm spherically distributed random vector. Then, we can make use of Theorem 2.1 in \citet{song-gupta-97} to obtain that, for all $u_1,\ldots,u_k \in [0,1)$,
$k \in \{1,\ldots,n\}$,
\begin{align*}
& P(U_1^{(p)} \leq u_1,\ldots, U_k^{(p)} \leq u_k)\\
 ={}& \int_0^{u_1} \ldots \int_0^{u_k}
\frac{p^k \Gamma(n/p)}{\Gamma^k(1/p) \Gamma((n-k)/p)} (1 - \sum\nolimits_{i=1}^k v_i^p)^{(n-k)/p - 1} \, \mathrm{d} v_k \cdots \mathrm{d} v_1 \\
={}& \frac{n-k}{n} \frac{\Gamma(1+n/p)}{\Gamma^k(1+ 1/p) \Gamma(1+(n-k)/p)}
\int_0^{u_1} \ldots \int_0^{u_k}
 (1 - \sum\nolimits_{i=1}^k v_i^p)^{(n-k)/p - 1} \, \mathrm{d} v_k \cdots \mathrm{d} v_1 \\
 \to{}& \frac{n-k}{n} \prod_{i=1}^k u_i = P(U_1^{\infty} \leq u_1,\ldots, U_k^{\infty} \leq u_k)
\end{align*}
as $p \to \infty$. Thus, we have that $(U_1^{(p)},\ldots, U_n^{(p)}) \to_d (U_1^{\infty},\ldots, U_n^{\infty})$. Furthermore, using again the identity $\Gamma(x+1) = x \cdot \Gamma(x)$ and the formula for the density of the radius distribution given in Theorem \ref{thm:lps-multi} we get for $0 < r < 1$
\begin{align}
f_{R^{(p)}}(r) & = \frac{\Gamma(1/p)}{\Gamma(n/p) \Gamma(1 - \frac{n-1}p)}  \cdot r^{n-1} \cdot (1-r^p)^{-\frac{n-1}{p}} \nonumber \\
& = n \cdot \frac{\Gamma(1+1/p)}{\Gamma(1+n/p) \Gamma(1 - \frac{n-1}p)}  \cdot r^{n-1} \cdot (1-r^p)^{-\frac{n-1}{p}} \nonumber \\
& \to n  \cdot r^{n-1} \nonumber
\end{align}
as $p \to \infty$ uniformly on every set of the form $[0,r_0]$ with $r_0 \in (0,1)$. Hence we get
\begin{equation}
P(R^{(p)} \leq r) =  \int_0^{r} f_{R^{(p)}}(x) \dd x \to r^n 
\label{eq:r-lim}
\end{equation}
as $p \to \infty$. Obviously, the right-hand side of \eqref{eq:r-lim} is equal to the cumulative distribution function of $R^{\infty} = \max\{U_1^\bot,\ldots, U_n^\bot\}$.

\bibliographystyle{artbibst}
\bibliography{lpsym}

\end{document}